\definecolor{halfgray}{gray}{0.55} 
\definecolor{webgreen}{rgb}{0,0.5,0}
\definecolor{webbrown}{rgb}{.6,0,0} \hypersetup{%
\theoremstyle{plain}% default
\newtheorem{theorem}{Theorem}[section]
\newtheorem{lemma}[theorem]{Lemma}
\newtheorem{corollary}[theorem]{Corollary}
\newtheorem{proposition}[theorem]{Proposition}
\theoremstyle{definition}
\newtheorem{remark}[theorem]{Remark}
\newtheorem{example}[theorem]{Example}
\DeclareMathOperator{\Id}{Id}
\def\Z{\mathbb{Z}}
\def\R{\mathbb{R}}
\def\N{\mathbb{N}}
\begin{document}

\title{Smooth Linearization of Nonautonomous Coupled Systems}

\begin{abstract} 
In a joint work with Palmer \cite{BDP} we have formulated sufficient conditions under which there exist continuous and  invertible transformations of the form $H_n(x,y)$ taking solutions of  a coupled system
\begin{equation*}
x_{n+1} =A_nx_n+f_n(x_n, y_n), \quad y_{n+1}=g_n( y_n),
\end{equation*}
onto the solutions of the associated partially linearized uncoupled system
\begin{equation*}
x_{n+1} =A_nx_n, \quad y_{n+1}=g_n( y_n).
\end{equation*}
In the present work we go one step further and provide conditions under which  $H_n$ and $H_n^{-1}$ are smooth in one of the variables $x$ and $y$.
 We emphasise that our conditions are of a general form and do not involve any kind of dichotomy, nonresonance or spectral gap assumptions for the linear part which are present on most of the related works.
\end{abstract}

\author{Lucas Backes}
\address{\noindent Departamento de Matem\'atica, Universidade Federal do Rio Grande do Sul, Av. Bento Gon\c{c}alves 9500, CEP 91509-900, Porto Alegre, RS, Brazil.}
\email{lucas.backes@ufrgs.br} 

\author{Davor Dragi\v cevi\'c}
\address{Faculty of Mathematics, University of Rijeka, Croatia}
\email{ddragicevic@math.uniri.hr}

\date{\today}

\keywords{linearization; differentiability; nonautonomous coupled systems}
\subjclass[2020]{37C15}

\maketitle

\section{Introduction}
\subsection{Autonomous linearization}
The classical Grobman-Hartman theorem~\cite{G1,G2,H1,H2} is one of the most important results in the qualitative theory of dynamical systems and differential equations. It  asserts that a $C^1$-diffeomorphism of $\R^n$  is topologically equivalent (or conjugated) to the associated linear part on a neighborhood of a hyperbolic fixed point. This result was later generalized  to
Banach spaces independently by  Palis~\cite{Palis} and Pugh~\cite{Pugh}. It is well-known that in general,  the conjugacy is only locally H\"{o}lder continuous~\cite{SX} and that it can fail to exhibit higher regularity even for $C^\infty$ dynamics.  On the other hand, topological conjugacy is often not sufficient  since for example it can fail to  distinguish a node from a focus as pointed out by
van Strien~\cite{Stri-JDE90}. 

Therefore, it was important to study the problem of formulating sufficient conditions that would ensure that the conjugacy in the Grobman-Hartman theorem exhibits higher regularity properties. The first results in this direction are due to Sternberg~\cite{Stern,Stern2} who
proved that $C^k$ ($k\ge1$) diffeomorphisms can be $C^r$ linearized
near  hyperbolic fixed points, where the integer $r>k$ depends on $k$ and so-called nonresonant conditions. Thus, in order to obtain $C^r$-linearization his results required  that the dynamics exhibits higher regularity. Later Belitskii~\cite{Bel73,Bel78} gave conditions for $C^k$ linearization of $C^{k,1}$ ($k\ge 1$)
diffeomorphisms under appropriate nonresonant conditions. His results was partially generalized to infinite-dimensional setting in~\cite{E2,R-S-JDDE04,ZZJ}. For other relevant results in this direction, we refer to~\cite{E1,R-S-JDDE06, ZLZ, ZhangZhang14JDE} and references therein.

\subsection{Nonautonomous topological linearization}
So far, we have only discussed the case of autonomous  dynamics. The first version of the Grobman-Hartman theorem for nonautonomous dynamics was obtained by Palmer~\cite{Palmer}. Let us formulate the version of his result in the case of discrete time which was obtained in~\cite{AW}. Assume that $(A_n)_{n\in \Z}$ is a sequence of bounded and invertible linear operators acting on  a Banach space $X=(X, |\cdot |)$. Furthermore, suppose that $f_n\colon X \to X$, $n\in \Z$ is a sequence of (nonlinear) maps. We can consider the associated nonlinear and nonautonomous difference equation
\begin{equation}\label{i1}
x_{n+1}=A_n x_n+f_n(x_n) \quad n\in \Z,
\end{equation}
 as well as its linear part given by
\begin{equation}\label{i2}
x_{n+1}=A_n x_n \quad n\in \Z.
\end{equation}
Assume that the following conditions hold:
\begin{itemize}
\item the sequence $(A_n)_{n\in \Z}$ admits an exponential dichotomy (see~\cite{C});
\item there exist $M,c>0$ such that $|f_n(x)| \le M$ and 
\[
|f_n(x)-f_n(y)| \le c|x-y|, 
\]
for $n\in \Z$ and $x, y\in X$.
\end{itemize}
Then, provided that $c$ is sufficiently small, equations~\eqref{i1} and~\eqref{i2} are topologically equivalent. Moreover, under additional mild conditions the conjugacies are locally H\"{o}lder continuous~\cite{SX}.

More recently, several authors obtained extensions of Palmer's theorem by weakening the hyperbolicity requirement for the linear part~\eqref{i2}, which also required changing the assumptions dealing with nonlinear terms $f_n$. We refer to~\cite{BD,Jiang, Jiang2, Lin, Reinf,  RS} and references therein.

\subsection{Nonautonomous linearization: higher regularity of conjugacies}
Motivated by previously described results dealing with autonomous dynamics, in recent years several authors dealt with the problem of formulating sufficient conditions under which the conjugacies in the nonautonomous linearization exhibit higher regularity.

To the best of our knowledge, the first results in this direction are those in~\cite{CR} (see also~\cite{CMR}) dealing with the case when the linear part admits an exponential contraction. In~\cite{CDS}, the authors have established the nonautonomous versions of Sternberg's results by formulating sufficient conditions under which the conjugacies are of order $C^r$, $r\ge 1$. These conditions require that the linear part~\eqref{i2} admits an exponential dichotomy and that appopriate nonresonant conditions expressed in terms of the so-called Sacker-Sell spectrum hold.
Similar in nature (although different in techniques) are the result established in~\cite{DZZ,DZZ2} which give conditons for $C^1$-linearization of nonautonomous systems. There, it is required that~\eqref{i2} admits a nonuniform exponential dichotomy and that appropriate spectral-gap conditions hold. 

Finally, we mention the recent paper~\cite{CJ} in which the authors formulate sufficient conditions for $C^1$-linearization of nonautonomous discrete systems. The main novelties when compared with the previous works are the following:
\begin{itemize}
\item \eqref{i2} is assumed to admit a nonuniform dichotomy which is not necessarily exponential;
\item no nonresonance or spectral-gap conditions are required.
\end{itemize}
The main idea of~\cite{CJ} is that the lack of hyperbolicity and spectral data can be compensated by appropriate assumptions related to the size of nonlinear terms $f_n$ in~\eqref{i1}.
However, it should be emphasized that when restricted to the setting of (for example)~\cite{DZZ}, the result in~\cite{CJ} can fail to be applicable or could yield weaker results from that in~\cite{DZZ}. We refer to Remark~\ref{EMO} for a detailed discussion.

\subsection{Contributions of the present paper}
In the present paper (following~\cite{BDP,SX,XWKR}),  we study the coupled nonautonomous and nonlinear system
\begin{equation}\label{i3}
x_{n+1} =A_nx_n+f_n(x_n, y_n), \quad y_{n+1}=g_n( y_n),
\end{equation}
as well as the associate uncoupled system
\begin{equation}\label{i4}
x_{n+1} =A_nx_n, \quad y_{n+1}=g_n( y_n).
\end{equation}
Here, $(A_n)_{n\in \Z}$ is a sequence of bounded and invertible linear operators on a Banach space $X$, $g_n\colon Y \to Y$, $n\in \Z$ is a sequence of homeomorphisms of a Banach space $Y$, and $f_n\colon X\times Y \to X$, $n\in \Z$ is a sequence of nonlinear maps.
Observe that when $Y$ is a trivial Banach space, \eqref{i3} and~\eqref{i4} reduce to~\eqref{i1} and~\eqref{i2}, respectively.

In~\cite{BDP}, the authors have formulated very general conditions under which~\eqref{i3} and~\eqref{i4} are topologically conjugated. The goal of the present paper is to formulate conditions under which these conjugacies are smooth. Our approach is inspired by that in the previously described paper by Casta\~{n}eda and Jara~\cite{CJ}. We stress that our arguments require a nontrivial modification of those in~\cite{CJ} as those need to be combined with techniques developed in~\cite{BDP}. In addition, we do not impose any dichotomy conditions for the linear part in~\eqref{i4}.

\section{Preliminaries}\label{sec: DT} 
Let $(X, |\cdot |_X)$ and $(Y, |\cdot |_Y)$ be two arbitrary Banach spaces. For the sake of simplicity both norms $|\cdot |_X$ and $|\cdot |_Y$ will be denoted simply by $|\cdot |$. By $\mathcal B(X)$ we denote the space of all bounded operators on $X$ equipped with the operator norm, which we will also denote by $|\cdot |$.

Let $(A_n)_{n\in \Z}$ be a sequence of invertible operators in $\mathcal B(X)$ and  $f_n \colon X\times Y \to X$, $n\in \Z$  a sequence of maps. Furthermore, let $(g_n)_{n\in \Z}$ be a sequence of homeomorphisms on $Y$. We consider the associated coupled system 
\begin{equation}\label{nnd}
x_{n+1} =A_nx_n+f_n(x_n, y_n), \quad y_{n+1}=g_n( y_n)
\end{equation}
as well the  uncoupled system
\begin{equation}\label{lnd}
x_{n+1}=A_n x_n, \quad y_{n+1}=g_n(y_n).
\end{equation}
Let  $k\mapsto (x_1(k, n, x), y(k, n, y))$ denote the solution of~\eqref{lnd} which equals $(x, y)$ when $k=n$. Similarly, let $k\mapsto (x_2(k, n, x, y), y(k, n, y))$ be  the solution of~\eqref{nnd}
with value  $(x, y)$ at $k=n$.

For $m, n\in \Z$, set 
\[
\mathcal A(m, n)=\begin{cases}
A_{m-1}\cdots A_n & \text{for $m>n$;}\\
\Id  &\text{for $m=n$;} \\
A_m^{-1}\cdots A_{n-1}^{-1} & \text{for $m<n$,}
\end{cases}
\]
where $\Id$ denotes the identity operator on $X$.
Let $(P_n)_{n\in \Z}$ be a sequence in $\mathcal B(X)$. We emphasize that the maps $P_n$ do not need to be projections. Then we define 
\begin{equation}\label{G-d}
\mathcal G(m, n)=\begin{cases}
\mathcal A(m, n)P_n & \text{for $m\ge n$;}\\
-\mathcal A(m, n)(\Id-P_n) & \text{for $m<n$.}
\end{cases}
\end{equation}

\subsection{Hypothesis} In order to facilitate future reference, we gather in this subsection all the hypothesis we are going to use in our results. We group them into two categories: basic and advanced conditions. 

\subsubsection{Basic conditions:}
\begin{enumerate}[label={(BC\arabic*)},ref=(BC\arabic*)]
\item \label{H: fn is Lipschitz and bounded}
there exist sequences $(\mu_n)_{n\in \Z}$ and $(\gamma_n)_{n\in \Z}$ in $[0, \infty)$ such that 
\begin{equation*}
|f_n(x, y)|\le \mu_n \quad \text{and} \quad |f_n(x_1, y)-f_n(x_2, y)|\le \gamma_n |x_1-x_2|,
\end{equation*}
for $n\in \Z$, $x, x_1, x_2\in X$ and $y\in Y$;

\item \label{H: N finite}
\begin{equation*}
N:=\sup_{m\in \Z} \sum^{\infty}_{n=-\infty}|\mathcal G(m,n)|\mu_{n-1} <\infty;
\end{equation*}
\item \label{H: q smaller 1}
\begin{equation*}
q:= \sup_{m\in \Z}\sum^{\infty}_{n=-\infty}|\mathcal G(m,n)|\gamma_{n-1} <1;
\end{equation*}

\item \label{H: Aj gamma j smaller 1} $|A_n^{-1}|\gamma_n<1$ for every $n\in \Z$.
\end{enumerate}

The basic conditions~\ref{H: fn is Lipschitz and bounded}, \ref{H: N finite}, \ref{H: q smaller 1} and~\ref{H: Aj gamma j smaller 1} ensure that the systems~\eqref{i3} and~\eqref{i4} are topologically equivalent (note that condition \ref{H: Aj gamma j smaller 1} guarantees that the solutions of~\eqref{i3} are defined globally). More precisely, we have the following result.

\begin{theorem}[Theorem 3.1 of \cite{BDP}]\label{theo: BDP disc}
Suppose that \ref{H: fn is Lipschitz and bounded}, \ref{H: N finite}, \ref{H: q smaller 1} and~\ref{H: Aj gamma j smaller 1} are satisfied. Then, there exists a sequence of continuous functions $H_n\colon X\times Y\to X\times Y$, $n\in \Z$ of the form $H_n(x, y)=(x+h_n(x, y), y)$, where  $\sup_{n,x,y}|h_n(x, y)|<\infty$, such that if $n\mapsto (x_n,y_n)$ is a solution of (\ref{lnd}),
then $n\mapsto H_n(x_n, y_n)$ is a solution of (\ref{nnd}). In addition, there exists a sequence of continuous functions $\bar H_n \colon X\times Y\to X\times Y$, $n\in \Z$ of the form $\bar H_n( x, y)=(x+\bar h_n(x, y), y)$, where $\sup_{n,x, y}|\bar h_n(x, y)|<\infty$,  such that if $n\mapsto (x_n, y_n)$ is a solution of (\ref{nnd}),
then $n\mapsto \bar H_n(x_n, y_n)$ is a solution of (\ref{lnd}). Moreover, $H_n$ and $\bar H_n$ are inverses of
each other, that is,
\begin{equation*}
H_n(\bar H_n(x,y))=(x,y)=\bar H_n(H_n(x,y)),
\end{equation*}
for $n\in \Z$ and $(x, y)\in X\times Y$.
\end{theorem}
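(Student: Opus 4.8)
The plan is to reduce the construction of $H_n$ to a Banach fixed point problem for the correction terms $(h_n)$ built from the Green-type kernel $\mathcal G$, to obtain $\bar H_n$ from an explicit formula of the same shape, and to deduce the inverse relations from the self-consistency of these two formulas.

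I would first record the functional equation characterising $H_n$. Writing $H_n(x,y)=(x+h_n(x,y),y)$ and demanding that $n\mapsto H_n(x_n,y_n)$ solve~\eqref{nnd} whenever $n\mapsto(x_n,y_n)$ solves~\eqref{lnd}, and using that the initial data of~\eqref{lnd} are arbitrary, one sees this is equivalent to $h_{n+1}(A_nx,g_n(y))=A_nh_n(x,y)+f_n(x+h_n(x,y),y)$ for all $n,x,y$. Fixing $n$ and an initial condition $(x,y)$ and evaluating along the solution $k\mapsto(x_1(k,n,x),y(k,n,y))$ of~\eqref{lnd}, this becomes, for $\xi_k:=h_k(x_1(k,n,x),y(k,n,y))$, the recursion $\xi_{k+1}=A_k\xi_k+f_k(\xi_k+x_1(k,n,x),y(k,n,y))$. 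A short computation using the cocycle identities $A_n\mathcal A(n,k)=\mathcal A(n+1,k)$ and $\mathcal A(n,n+1)=A_n^{-1}$, treating the three index ranges $k\le n$, $k=n+1$ and $k\ge n+2$ in~\eqref{G-d} separately, shows that any $\xi$ satisfying $\xi_m=\sum_{k\in\Z}\mathcal G(m,k)f_{k-1}(\xi_{k-1}+x_1(k-1,n,x),y(k-1,n,y))$ for all $m$ solves this recursion. Reading this off at $m=n$ and letting $(n,x,y)$ vary recasts the whole problem as a fixed point equation $h=\mathcal Th$ on the Banach space $\mathcal H$ of sequences $(h_n)$ of continuous maps $h_n\colon X\times Y\to X$ with $\|h\|:=\sup_{n,x,y}|h_n(x,y)|<\infty$, where
\begin{equation*}
(\mathcal Th)_n(x,y)=\sum_{k\in\Z}\mathcal G(n,k)\,f_{k-1}\Big(x_1(k-1,n,x)+h_{k-1}\big(x_1(k-1,n,x),y(k-1,n,y)\big),\ y(k-1,n,y)\Big).
\end{equation*}

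One then checks that $\mathcal T$ maps $\mathcal H$ into itself and is a contraction. The bound in~\ref{H: fn is Lipschitz and bounded} gives $|(\mathcal Th)_n(x,y)|\le\sum_k|\mathcal G(n,k)|\mu_{k-1}\le N$ by~\ref{H: N finite}, and the same estimate shows the defining series converges uniformly in $(x,y)$, so $(\mathcal Th)_n$ is a uniform limit of continuous maps and hence continuous. The Lipschitz bound in~\ref{H: fn is Lipschitz and bounded} gives $\|\mathcal Th-\mathcal Th'\|\le\big(\sup_n\sum_k|\mathcal G(n,k)|\gamma_{k-1}\big)\|h-h'\|=q\|h-h'\|$ with $q<1$ by~\ref{H: q smaller 1}. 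The Banach fixed point theorem yields a unique $h\in\mathcal H$, automatically with $\|h\|\le N$, and $H_n(x,y)=(x+h_n(x,y),y)$ is the desired conjugacy. For $\bar H_n$ the analogous functional equation is $\bar h_{n+1}(x_2(n+1,n,x,y),g_n(y))=A_n\bar h_n(x,y)-f_n(x,y)$; since $\bar h$ no longer occurs inside $f_n$, this is not a fixed point problem but, along a solution of~\eqref{nnd}, an inhomogeneous linear recursion with prescribed right-hand side, and the same kernel computation (with the cocycle property of the nonlinear evolution operator in place of that of $\mathcal A$) gives outright
\begin{equation*}
\bar h_n(x,y)=-\sum_{k\in\Z}\mathcal G(n,k)\,f_{k-1}\big(x_2(k-1,n,x,y),\ y(k-1,n,y)\big).
\end{equation*}
For this to make sense one must know that every solution of~\eqref{nnd} is defined on all of $\Z$ and depends continuously on its initial value backward in time as well: the backward step $x_2(k,\cdot)=A_k^{-1}\big(x_2(k+1,\cdot)-f_k(x_2(k,\cdot),y(k,\cdot))\big)$ is a contraction in the unknown with constant $|A_k^{-1}|\gamma_k<1$ by~\ref{H: Aj gamma j smaller 1}, which delivers global existence, backward uniqueness and continuous dependence; boundedness $\|\bar h\|\le N$ and continuity of $\bar h_n$ then follow as for $h_n$.

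Finally, the inverse relations follow from the self-consistency of the two series. If $n\mapsto(x_n,y_n)$ solves~\eqref{lnd} and $(u_n,y_n):=H_n(x_n,y_n)$, then $(u_n,y_n)$ solves~\eqref{nnd}, so by uniqueness $x_2(k-1,n,u_n,y_n)=u_{k-1}$; plugging this into the formula for $\bar h_n(u_n,y_n)$ and comparing with the fixed point identity $h_n(x_n,y_n)=\sum_k\mathcal G(n,k)f_{k-1}(u_{k-1},y_{k-1})$ read along the orbit gives $\bar h_n(u_n,y_n)=-h_n(x_n,y_n)$, i.e. $\bar H_n\circ H_n=\Id$. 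For the reverse composition, given a solution $(x_n,y_n)$ of~\eqref{nnd} put $(w_n,y_n)=\bar H_n(x_n,y_n)$ and $(v_n,y_n)=H_n(w_n,y_n)$; combining the two formulas yields $v_k-x_k=\sum_j\mathcal G(k,j)\big(f_{j-1}(v_{j-1},y_{j-1})-f_{j-1}(x_{j-1},y_{j-1})\big)$, whence $\sup_k|v_k-x_k|\le q\sup_k|v_k-x_k|$ by~\ref{H: fn is Lipschitz and bounded} and~\ref{H: q smaller 1}, so $v_k\equiv x_k$ and $H_n\circ\bar H_n=\Id$. I expect the main obstacles to be, first, getting the kernel identity right so that $\sum_k\mathcal G(n,k)(\cdot)_{k-1}$ genuinely inverts the linearised difference operator (the three branches of~\eqref{G-d} must each be checked), and, second, establishing global existence and backward continuity of the nonlinear evolution operator, which is precisely where~\ref{H: Aj gamma j smaller 1} is indispensable and without which $\bar h_n$ is not even defined.
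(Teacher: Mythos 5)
Your construction is correct and follows essentially the same route as the paper's source for this theorem (\cite{BDP}): a fixed-point equation for $h_n$ built from the kernel $\mathcal G$, the explicit series for $\bar h_n$ (which, after the index shift $k\mapsto k+1$, is exactly the formula \eqref{eq: hn bar} that the paper quotes from that proof), backward solvability of \eqref{nnd} via the contraction supplied by \ref{H: Aj gamma j smaller 1}, and the inverse relations via the contraction estimate with constant $q<1$. Nothing further is needed.
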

Furthermore, under some additional assumptions it was proved in \cite{BDP} that the conjugacies $H_n(x,y)$ and $\bar H_n(x,y)$ are H\"older continuous with respect to the variables $x$ and $y$. Our objective now is to formulate sufficient conditions under which those maps exhibit higher regularity properties. For this purpose we introduce the next set of conditions.

\subsubsection{Advanced conditions:}
\begin{enumerate}[label={(AC\arabic*)},ref=(AC\arabic*)]

\item \label{H: fn diff first var} for every $y\in Y$, the map $x\to f_n(x,y)$ is $C^r$, $r\geq 1$, for every $n\in \Z$;

\item \label{H: Kn+Jn finite} given $n\in \Z$,
\begin{displaymath}
K_n:=\sum _{k< n} |\mathcal{G} (n,k+1)|\gamma_k \left(\prod_{j=k}^{n-1} \frac{|A_j^{-1}|}{1-\gamma_j|A_j^{-1}|}\right) <+\infty
\end{displaymath}
and 
\begin{displaymath}
J_n:=\sum _{k> n} |\mathcal{G} (n,k+1)|\gamma_k \left(\prod_{j=n}^{k-1} (|A_j|+\gamma_j)\right)<+\infty;
\end{displaymath}

\item \label{H: Kn+Jn smaller 1} given $n\in \Z$,
\begin{displaymath}
K_n+J_n+|\mathcal{G} (n,n+1)|\gamma_n <1;
\end{displaymath}

\item \label{H: fn is Lipschitz second var} there exists a sequence $(\rho_n)_{n\in \Z}$ in $[0, \infty)$ such that 
\begin{equation*}
|f_n(x, y_1)-f_n(x, y_2)|\le \rho_n |y_1-y_2|,
\end{equation*}
for $n\in \Z$, $x\in X$ and $y_1,y_2\in Y$;

\item \label{H: gn is Lipschitz} 
there exist sequences $(\tau_n)_{n\in \Z}$ and $(\sigma_n)_{n\in \Z}$ in $[0, \infty)$ such that 
\begin{equation*}
|g_n(y_1)-g_n(y_2)|\le \tau_n |y_1-y_2| \text{ and } |g^{-1}_n(y_1)-g^{-1}_n(y_2)|\le \sigma_n |y_1-y_2|,
\end{equation*}
for every $n\in \Z$ and $y_1,y_2\in Y$.

\item \label{H: sigma rho smaller than 1}
$\sigma_n\rho_n\leq 1$ for every $n\in \Z$;

\item\label{H: fn diff second var} the map $y\to f_n(x,y)$ is $C^r$, $r\geq 1$, for every $n\in \Z$;

\item \label{H: gn diff} the map $y\to g_n(y)$ is a $C^r$-diffeomorphism, $r\geq 1$, for every $n\in \Z$;

\item \label{H: temporary name}
given $n\in \Z$, 
\begin{displaymath}
\sum_{k\in \Z}| \mathcal G(n, k+1)|\left(\gamma_k M_{k,n}+\rho_k D_{k,n}\right) <+\infty
\end{displaymath}
where $D_{k,n}$ and $M_{k, n}$ are given by~\eqref{Dkn} and~\eqref{Mkn} respectively.

\end{enumerate}

\begin{remark} The advanced conditions \ref{H: fn diff first var}-\ref{H: temporary name} will play a fundamental role in improving the regularity of the topological conjugacies given by Theorem \ref{theo: BDP disc}. More precisely,
\begin{itemize}
\item provided that~\ref{H: fn diff first var} holds and that~\ref{H: Kn+Jn finite} is satisfied for $n\in \Z$, we have that the conjugacy $\bar H_n$ given by Theorem~\ref{theo: BDP disc} is $C^1$ in  the first variable (see Theorem~\ref{theo: reg H bar}). In addition, we can ensure that~\ref{H: Kn+Jn finite} holds simultaneously for every $n\in \Z$ (see Example~\ref{remm});
\item provided that~\ref{H: fn diff first var} holds and that~\ref{H: Kn+Jn smaller 1} is satisfied for $n\in \Z$, we have that the conjugacy $H_n$ given by Theorem~\ref{theo: BDP disc} is $C^1$ in the first variable  (see Theorem~\ref{theo: reg H}). In addition, we can ensure that~\ref{H: Kn+Jn smaller 1} holds simultaneously for every $n\in \Z$ (see Examples~\ref{EX1} and~\ref{EX2});
\item if~\ref{H: fn diff first var}, \ref{H: fn is Lipschitz second var}, \ref{H: gn is Lipschitz}, \ref{H: sigma rho smaller than 1},  \ref{H: fn diff second var}, \ref{H: gn diff} hold and~\ref{H: temporary name} is satisfied for $n\in \Z$, then $\bar H_n$ is $C^1$ in the second variable (see Theorem~\ref{theo: reg H bar second var});
\item suppose that~\ref{H: fn diff first var}, \ref{H: fn is Lipschitz second var}, \ref{H: gn is Lipschitz}, \ref{H: sigma rho smaller than 1}, \ref{H: fn diff second var} and \ref{H: gn diff} hold and that~\ref{H: Kn+Jn smaller 1}  and~\ref{H: temporary name} are satisfied for $n\in \Z$. Then, $H_n$ is $C^1$ in the second variable (see Theorem~\ref{theo: reg H second var}). In addition, we can ensure that~\ref{H: temporary name} holds simultaneously for every $n\in \Z$ (see Example~\ref{end}).
\end{itemize}
\end{remark}

\section{Main results}

\subsection{Regularity of conjugacies with respect to the first variable} \label{sec: reg first var}
We are interested in formulating sufficient conditions under which conjugacies $H_n$ and $\bar H_n$ are smooth with respect to the first variable.

We start by studying the regularity properties of the map $\xi \to x_2(k,n,\xi,\eta)$.
Consider
\[
 C_{k,n}=\begin{cases}
\prod _{j=n}^{k-1}(|A_j|+\gamma_j) & \text{for $k>n$;}\\
1  &\text{for $k=n$;} \\
\prod_{j=k}^{n-1} \frac{|A_j^{-1}|}{1-\gamma_j|A_j^{-1}|} & \text{for $k<n$.}
\end{cases}
\]

\begin{lemma} \label{lemma: sol lips}
Suppose \ref{H: fn is Lipschitz and bounded} and \ref{H: Aj gamma j smaller 1} are satisfied. Then, for any $\xi,\zeta \in X$, $\eta \in Y$ and $n,k\in\Z$, we have that 
\begin{displaymath}
|x_2(k,n,\xi,\eta)-x_2(k,n,\zeta,\eta)|\leq C_{k,n}|\xi - \zeta|.
\end{displaymath}
\end{lemma}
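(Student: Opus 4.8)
The plan is to prove the estimate by induction on $k$, separately for $k \geq n$ (forward iteration) and $k < n$ (backward iteration), since the recursion for the nonlinear system~\eqref{nnd} in the $x$-component reads $x_2(k+1,n,\xi,\eta) = A_k x_2(k,n,\xi,\eta) + f_k(x_2(k,n,\xi,\eta), y(k,n,\eta))$, and this is naturally solved forward or backward from the initial time $n$. Note that the $y$-component $y(k,n,\eta)$ is the same for both initial conditions $\xi$ and $\zeta$, so $f_k$ is evaluated at the same second argument in both cases; this is exactly what lets us invoke the Lipschitz bound~\ref{H: fn is Lipschitz and bounded}, namely $|f_k(x_1,y) - f_k(x_2,y)| \leq \gamma_k|x_1 - x_2|$, without worrying about the $y$-variation.

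For the forward case $k \geq n$, write $\Delta_k := |x_2(k,n,\xi,\eta) - x_2(k,n,\zeta,\eta)|$. The base case $k = n$ is immediate since both solutions equal the initial value, so $\Delta_n = |\xi - \zeta| = C_{n,n}|\xi - \zeta|$. For the inductive step, using the recursion and the triangle inequality,
\begin{displaymath}
\Delta_{k+1} \leq |A_k|\,\Delta_k + |f_k(x_2(k,n,\xi,\eta),y(k,n,\eta)) - f_k(x_2(k,n,\zeta,\eta),y(k,n,\eta))| \leq (|A_k| + \gamma_k)\Delta_k,
\end{displaymath}
and combining with the inductive hypothesis $\Delta_k \leq C_{k,n}|\xi-\zeta|$ together with $C_{k+1,n} = (|A_k| + \gamma_k)C_{k,n}$ gives $\Delta_{k+1} \leq C_{k+1,n}|\xi-\zeta|$, as desired.

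For the backward case $k < n$, I would solve the recursion for $x_2(k,n,\cdot,\eta)$ in terms of $x_2(k+1,n,\cdot,\eta)$: from $x_2(k+1,\cdot) = A_k x_2(k,\cdot) + f_k(x_2(k,\cdot), y(k,n,\eta))$ we get $x_2(k,\cdot) = A_k^{-1}\big(x_2(k+1,\cdot) - f_k(x_2(k,\cdot), y(k,n,\eta))\big)$. Taking the difference of the two solutions, applying $|A_k^{-1}|$ and the triangle inequality, and using the Lipschitz bound on $f_k$ yields
\begin{displaymath}
\Delta_k \leq |A_k^{-1}|\big(\Delta_{k+1} + \gamma_k \Delta_k\big),
\end{displaymath}
hence $(1 - \gamma_k|A_k^{-1}|)\Delta_k \leq |A_k^{-1}|\Delta_{k+1}$, and here is where~\ref{H: Aj gamma j smaller 1} enters: since $\gamma_k|A_k^{-1}| < 1$, we may divide to obtain $\Delta_k \leq \frac{|A_k^{-1}|}{1 - \gamma_k|A_k^{-1}|}\Delta_{k+1}$. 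Downward induction on $k$ starting from $k = n$ (base case $\Delta_n = |\xi - \zeta|$), together with the definition $C_{k,n} = \prod_{j=k}^{n-1}\frac{|A_j^{-1}|}{1-\gamma_j|A_j^{-1}|}$ which satisfies $C_{k,n} = \frac{|A_k^{-1}|}{1-\gamma_k|A_k^{-1}|}C_{k+1,n}$, completes the argument.

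The only subtle point — and the one place where care is genuinely needed rather than routine — is the backward step: one must make sure the solution $x_2(k,n,\xi,\eta)$ is actually defined for $k < n$ so that the estimate makes sense, which is precisely guaranteed by~\ref{H: Aj gamma j smaller 1} (as the excerpt notes, this condition ensures global definition of the solutions of~\eqref{nnd}), and one must resist the temptation to bound $\Delta_{k+1}$ in terms of $\Delta_k$ in the wrong direction. Everything else is a direct two-sided induction driven by the recursive structure of the difference equation and the multiplicative recursions satisfied by $C_{k,n}$.
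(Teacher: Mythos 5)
Your proof is correct and follows essentially the same route as the paper: a forward induction giving $\Delta_{k+1}\le(|A_k|+\gamma_k)\Delta_k$ and a backward step yielding $(1-\gamma_k|A_k^{-1}|)\Delta_k\le|A_k^{-1}|\Delta_{k+1}$, with \ref{H: Aj gamma j smaller 1} justifying the division. The only cosmetic difference is that the paper packages the backward step through an explicitly constructed inverse map $T_j$ (defined in~\eqref{Tj} and verified to invert $\xi\mapsto A_j\xi+f_j(\xi,\eta)$, which also settles the global definition of backward solutions that you correctly flag and defer to \ref{H: Aj gamma j smaller 1}), whereas you rearrange the recursion directly; the underlying estimate is identical.
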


\begin{proof}
Fix $n\in \Z$. By \eqref{nnd} we have that
\begin{equation}\label{eq: x2 future}
\begin{split}
x_2(n+1,n,\xi,\eta)&=A_nx_2(n,n,\xi,\eta)+f_n(x_2(n,n,\xi,\eta),y(n,n,\eta))\\
&=A_n\xi+f_n(\xi,\eta).\\
\end{split}
\end{equation}
Consequently, using \ref{H: fn is Lipschitz and bounded},
\begin{displaymath}
\begin{split}
|x_2(n+1,n,\xi,\eta)-x_2(n+1,n,\zeta,\eta)|&=|A_n\xi+f_n(\xi,\eta)-A_n\zeta-f_n(\zeta,\eta)|\\
&\leq \left(|A_n|+\gamma_n\right)|\xi-\zeta|.
\end{split}
\end{displaymath}
Thus, using that
\begin{displaymath}
x_2(k,n,\xi,\eta)=x_2(k,m,x_2(m,n,\xi,\eta),y(m,n,\eta)) \quad k\ge m\ge n,
\end{displaymath} 
and proceeding inductively we conclude that 
\begin{displaymath}
\begin{split}
|x_2(k,n,\xi,\eta)-x_2(k,n,\zeta,\eta)|&\leq \prod _{j=n}^{k-1}(|A_j|+\gamma_j)|\xi - \zeta|\\
&= C_{k,n}|\xi - \zeta|,
\end{split}
\end{displaymath}
for every $k> n$.

We now consider the case when $k<n$. Given $(\xi,\eta)\in X\times Y$ and $j\in \Z$, let us consider
\begin{displaymath}
F_j(\xi,\eta)=A_j\xi +f_j(\xi,\eta)
\end{displaymath}
and 
\begin{equation}\label{Tj}
T_j(\xi,\eta)=A_j^{-1}\xi-A_j^{-1}f_j(T_j(\xi,\eta),\eta).
\end{equation}
It is easy to check that $F_j(T_j(\xi,\eta),\eta)=\xi$. In addition, we we have that
\[
\begin{split}
T_j(F_j(\xi,\eta),\eta) &=A_j^{-1}F_j(\xi,\eta)-A_j^{-1}f_j(T_j(F_j(\xi,\eta),\eta) )\\
&=A_j^{-1}(A_j\xi +f_j(\xi,\eta))-A_j^{-1}f_j(T_j(F_j(\xi,\eta),\eta) )\\
&=\xi+A_j^{-1}f_j(\xi,\eta)-A_j^{-1}f_j(T_j(F_j(\xi,\eta),\eta)),
\end{split}
\]
which implies that
\[
|T_j(F_j(\xi,\eta),\eta) -\xi| \le |A_j^{-1}| \gamma_j|T_j(F_j(\xi,\eta),\eta) -\xi|.
\]
 By~\ref{H: Aj gamma j smaller 1}, we conclude that $T_j(F_j(\xi,\eta),\eta)=\xi$. Moreover, since
\begin{displaymath}
\begin{split}
\xi&=A_{n-1}x_2(n-1,n,\xi,\eta)+f_{n-1}(x_2(n-1,n,\xi,\eta),y(n-1,n,\eta))\\
&=F_{n-1}(x_2(n-1,n,\xi,\eta),y(n-1,n,\eta)),
\end{split}
\end{displaymath}
it follows from the previous observations that
\begin{equation}\label{eq: Tn x2}
T_{n-1}(\xi,y(n-1,n,\eta))=x_2(n-1,n,\xi,\eta).
\end{equation}
Consequently,
\begin{displaymath}
\begin{split}
&|x_2(n-1,n,\xi,\eta)-x_2(n-1,n,\zeta,\eta)| \\
&=|T_{n-1}(\xi,y(n-1,n,\eta))-T_{n-1}(\zeta,y(n-1,n,\eta))|.
\end{split}
\end{displaymath}
Now, using \ref{H: fn is Lipschitz and bounded}, for any $y\in Y$ we have that
\begin{displaymath}
\begin{split}
&|T_{n-1}(\xi,y)-T_{n-1}(\zeta,y)| \\
&=|A_{n-1}^{-1}(\xi-\zeta)+A_{n-1}^{-1}\left(f_{n-1}(T_{n-1}(\zeta,y),y)-f_{n-1}(T_{n-1}(\xi,y),y)\right)|\\
&\leq |A_{n-1}^{-1}||\xi-\zeta|+|A_{n-1}^{-1}|\gamma_{n-1}|T_{n-1}(\zeta,y)-T_{n-1}(\xi,y)|.
\end{split}
\end{displaymath}
Therefore, using \ref{H: Aj gamma j smaller 1} we conclude that
\begin{displaymath}
\begin{split}
|T_{n-1}(\xi,y)-T_{n-1}(\zeta,y)|&\leq \frac{|A_{n-1}^{-1}|}{1-\gamma_{n-1}|A_{n-1}^{-1}|} |\xi-\zeta|,
\end{split}
\end{displaymath}
which combined with the previous observations implies that
\begin{displaymath}
\begin{split}
|x_2(n-1,n,\xi,\eta)-x_2(n-1,n,\zeta,\eta)|&\leq \frac{|A_{n-1}^{-1}|}{1-\gamma_{n-1}|A_{n-1}^{-1}|} |\xi-\zeta|.
\end{split}
\end{displaymath}
 Consequently, by  proceeding inductively as in the case when $k>n$,  we conclude that 
\begin{displaymath}
\begin{split}
|x_2(k,n,\xi,\eta)-x_2(k,n,\zeta,\eta)|&\leq  C_{k,n}|\xi - \zeta|
\end{split}
\end{displaymath}
for every $k< n$. The case when $k=n$ is obvious and therefore the proof is complete.
\end{proof}

\begin{proposition}\label{prop: diff of sol}
Assume that \ref{H: fn is Lipschitz and bounded}, \ref{H: Aj gamma j smaller 1} and \ref{H: fn diff first var} hold. Then, the map $\xi \to x_2(k,n,\xi,\eta)$ is of class $C^r$, $r\geq 1$, for every $n,k\in\mathbb{Z}$ and $\eta \in Y$.
\end{proposition}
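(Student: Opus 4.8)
The plan is to split into the cases $k\ge n$ and $k<n$ (the case $k=n$ being the identity map), and in each case to express $\xi\mapsto x_2(k,n,\xi,\eta)$ as a finite composition of ``time-one'' maps that are of class $C^r$ in the $X$-variable. Throughout fix $n\in\Z$ and $\eta\in Y$ and abbreviate $\eta_j:=y(j,n,\eta)$. Recall that~\ref{H: Aj gamma j smaller 1} guarantees that solutions of~\eqref{nnd} are globally defined and unique, so the cocycle identity $x_2(k,n,\xi,\eta)=x_2(k,m,x_2(m,n,\xi,\eta),y(m,n,\eta))$ holds for all $k,m\in\Z$.

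\emph{Case $k\ge n$.} Since $\bigl(x_2(\cdot,n,\xi,\eta),y(\cdot,n,\eta)\bigr)$ solves~\eqref{nnd}, one has $x_2(k+1,n,\xi,\eta)=F_k(x_2(k,n,\xi,\eta),\eta_k)$, where $F_k(\xi,y):=A_k\xi+f_k(\xi,y)$, with $x_2(n,n,\xi,\eta)=\xi$. For each fixed $y$ the map $\xi\mapsto F_k(\xi,y)$ is $C^r$, being the sum of a bounded linear operator and the $C^r$ map $f_k(\cdot,y)$ provided by~\ref{H: fn diff first var}. Hence $\xi\mapsto x_2(k,n,\xi,\eta)=\bigl(F_{k-1}(\cdot,\eta_{k-1})\circ\cdots\circ F_n(\cdot,\eta_n)\bigr)(\xi)$ is a composition of finitely many $C^r$ maps, hence $C^r$.

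\emph{Case $k<n$.} The heart of the matter is to show that each $F_j(\cdot,y)$ is a $C^r$ diffeomorphism of $X$ onto $X$, with inverse the map $T_j(\cdot,y)$ from~\eqref{Tj}. Granting this, iterating~\eqref{eq: Tn x2} together with the cocycle identity yields $x_2(k,n,\xi,\eta)=T_k\bigl(\,\cdots T_{n-1}(\xi,\eta_{n-1})\cdots\,,\eta_k\bigr)$, once more a finite composition of maps that are $C^r$ in their first argument, completing the proof. To establish the claim: the proof of Lemma~\ref{lemma: sol lips} shows $F_j(T_j(\xi,y),y)=\xi$ and $T_j(F_j(\xi,y),y)=\xi$ for all $\xi\in X$, so $F_j(\cdot,y)$ is a bijection of $X$ with set-theoretic inverse $T_j(\cdot,y)$. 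Moreover $F_j(\cdot,y)$ is $C^r$ with $D_\xi F_j(\xi,y)=A_j+D_\xi f_j(\xi,y)$; since $f_j(\cdot,y)$ is $\gamma_j$-Lipschitz by~\ref{H: fn is Lipschitz and bounded}, the operator norm of $D_\xi f_j(\xi,y)$ is at most $\gamma_j$, whence $|A_j^{-1}D_\xi f_j(\xi,y)|\le|A_j^{-1}|\gamma_j<1$ by~\ref{H: Aj gamma j smaller 1}, so that $D_\xi F_j(\xi,y)=A_j\bigl(\Id+A_j^{-1}D_\xi f_j(\xi,y)\bigr)$ is invertible in $\mathcal B(X)$ for every $\xi$. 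By the inverse function theorem in Banach spaces, $F_j(\cdot,y)$ is a local $C^r$ diffeomorphism around each point of $X$; combined with the global injectivity of $F_j(\cdot,y)$, this forces its global inverse $T_j(\cdot,y)$ to be of class $C^r$ on all of $X$.

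The only genuinely non-routine ingredient is the diffeomorphism claim in the second case, and within it the verification that $D_\xi F_j(\xi,y)$ is everywhere invertible; this is exactly where~\ref{H: Aj gamma j smaller 1} enters, in tandem with the bound on $|D_\xi f_j|$ coming from the Lipschitz hypothesis~\ref{H: fn is Lipschitz and bounded}. Everything else is bookkeeping with compositions of $C^r$ maps.
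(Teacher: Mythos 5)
Your proof is correct, and it follows the same overall decomposition as the paper's: for $k\ge n$ the solution is a finite composition of the time-one maps $F_j(\cdot,\eta_j)$, and for $k<n$ everything reduces to showing that the inverse maps $T_j(\cdot,y)$ from~\eqref{Tj} are $C^r$. Where you diverge is in how that last point is established. The paper proves differentiability of $T_j$ from first principles: it writes down the candidate derivative $L=\bigl(A_j+\frac{\partial f_j(T_j(\xi,\eta),\eta)}{\partial u}\bigr)^{-1}$ and verifies directly, via the Lipschitz bound $\bigl|\frac{\partial f_j}{\partial u}\bigr|\le\gamma_j$ and condition~\ref{H: Aj gamma j smaller 1}, that $T_j(\xi+\delta,\eta)-T_j(\xi,\eta)-L\delta=\mathbf{o}(\delta)$, in effect re-deriving the inverse function theorem in this special case. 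You instead observe that $F_j(\cdot,y)$ is a global bijection (using the identities $F_j(T_j(\xi,y),y)=\xi$ and $T_j(F_j(\xi,y),y)=\xi$ already established in Lemma~\ref{lemma: sol lips}) whose derivative $A_j+D_\xi f_j$ is everywhere invertible by the same Neumann-series estimate, and then invoke the Banach-space inverse function theorem. Both arguments hinge on exactly the same quantitative input, namely $|A_j^{-1}|\gamma_j<1$. Your route is shorter and delegates the analysis to a standard theorem; the paper's route is self-contained and, more importantly, produces an explicit first-principles template (the $\mathbf{o}(\delta)$ bootstrapping argument) that is reused almost verbatim in the proofs of Theorem~\ref{theo: reg H} and Proposition~\ref{prop: diff of sol second var}, where the relevant maps are only defined implicitly and no clean ambient diffeomorphism is available to which the inverse function theorem could be applied. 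So nothing is lost in your version for this proposition, but the paper's choice pays off later in the development.
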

\begin{proof}
Fix $n\in \Z$. By \eqref{nnd},  we have that
\begin{displaymath}
\begin{split}
x_2(n+1,n,\xi,\eta)&=A_n\xi+f_n(\xi,\eta).\\
\end{split}
\end{displaymath}
Consequently, it follows from~\ref{H: fn diff first var} that $\xi \to x_2(n+1,n,\xi,\eta)$ is of class $C^r$. Thus, proceeding inductively, we conclude that $\xi \to x_2(k,n,\xi,\eta)$ is of class $C^r$ for every $k>n$.

Next, we claim that the map $\xi \to T_{j}(\xi,\eta)$ is of class $C^r$, where $T_j$ is given by~\eqref{Tj}. Indeed, by \ref{H: fn is Lipschitz and bounded} we have that
\begin{equation}\label{eq: bound dif of f}
\begin{split}
\left|\frac{\partial f_j(u,v)}{\partial u}\right|&=\lim_{\delta \to 0} \frac{|f_j(u+\delta,v)-f_j(u,v)|}{|\delta|}\\
&\leq \lim_{\delta \to 0} \frac{\gamma_j|\delta|}{|\delta|}\\
&=\gamma_j,
\end{split}
\end{equation}
where $\frac{\partial f_j(\cdot,\cdot)}{\partial u}$ denotes the derivative of $f_j$ with respect to the first variable.
Thus,  by \ref{H: Aj gamma j smaller 1}, it follows that the operator $\Id+A_j^{-1}\frac{\partial f_j(\xi,\eta)}{\partial \xi}$ is invertible. In particular, we can consider
\begin{equation}\label{eq: def of L}
L: =\left(\Id+A_j^{-1}\frac{\partial f_j(T_j(\xi,\eta),\eta)}{\partial u}\right)^{-1}A_j^{-1}=\left(A_j+\frac{\partial f_j(T_j(\xi,\eta),\eta)}{\partial u}\right)^{-1}.
\end{equation}
 Now, observing that
\begin{displaymath}
\begin{split}
&T_{j}(\xi+\delta,\eta)-T_{j}(\xi,\eta)-L\delta \\
&=A_j^{-1}\delta-A_j^{-1}\left(f_j(T_j(\xi+\delta,\eta),\eta)-f_j(T_j(\xi,\eta),\eta)\right)-L\delta\\
&=A_j^{-1}\delta-A_j^{-1}\left(f_j(T_j(\xi+\delta,\eta),\eta)-f_j(T_j(\xi,\eta)+L\delta,\eta)\right)\\
&\phantom{=}-A_j^{-1}\left(f_j(T_j(\xi,\eta)+L\delta,\eta)-f_j(T_j(\xi,\eta),\eta)\right)-L\delta,\\
\end{split}
\end{displaymath}
it follows from \ref{H: fn is Lipschitz and bounded} that
\begin{displaymath}
\begin{split}
&\left| T_{j}(\xi+\delta,\eta)-T_{j}(\xi,\eta)-L\delta\right| \\
&\leq \left|A_j^{-1}\left(f_j(T_j(\xi+\delta,\eta),\eta)-f_j(T_j(\xi,\eta)+L\delta,\eta)\right)\right|\\
&\phantom{=}+\left|A_j^{-1}\delta-A_j^{-1}\left(f_j(T_j(\xi,\eta)+L\delta,\eta)-f_j(T_j(\xi,\eta),\eta)\right)-L\delta\right|\\
&\leq \gamma_j|A_j^{-1}| \left|T_j(\xi+\delta,\eta)-T_j(\xi,\eta)-L\delta\right|\\
&\phantom{=}+\left|A_j^{-1}\delta-A_j^{-1}\left(f_j(T_j(\xi,\eta)+L\delta,\eta)-f_j(T_j(\xi,\eta),\eta)\right)-L\delta\right|.\\
\end{split}
\end{displaymath}
Using~\ref{H: Aj gamma j smaller 1}, we obtain that
\begin{displaymath}
\begin{split}
&(1-\gamma_j|A_j^{-1}|)\left| T_{j}(\xi+\delta,\eta)-T_{j}(\xi,\eta)-L\delta\right| \\
&\leq \left|A_j^{-1}\delta-A_j^{-1}\left(f_j(T_j(\xi,\eta)+L\delta,\eta)-f_j(T_j(\xi,\eta),\eta)\right)-L\delta\right|.\\
\end{split}
\end{displaymath}
Therefore, since by \ref{H: fn diff first var} $u\to f_j(T_j(\xi,\eta)+Lu,\eta) $ is differentiable,
\begin{displaymath}
\frac{\partial f_j(T_j(\xi,\eta)+Lu,\eta)}{\partial u}\bigg |_{u=0} =\frac{\partial f_j(T_j(\xi,\eta),\eta)}{\partial u} L
\end{displaymath}
and since 
\[
\begin{split}
&A_j^{-1}\frac{\partial f_j(T_j(\xi,\eta),\eta)}{\partial u}L +L \\
&=\left(\Id+A_j^{-1}\frac{\partial f_j(T_j(\xi,\eta),\eta)}{\partial u}\right )L\\
&=\left(\Id+A_j^{-1}\frac{\partial f_j(T_j(\xi,\eta),\eta)}{\partial u}\right )\left(\Id+A_j^{-1}\frac{\partial f_j(T_j(\xi,\eta),\eta)}{\partial u}\right)^{-1}A_j^{-1}\\
&=A_j^{-1},
\end{split}
\]
we get that
\begin{displaymath}
\begin{split}
\lim_{\delta\to 0} \frac{\left|A_j^{-1}\delta-A_j^{-1}\left(f_j(T_j(\xi,\eta)+L\delta,\eta)-f_j(T_j(\xi,\eta),\eta)\right)-L\delta\right|}{|\delta|}=0.\\
\end{split}
\end{displaymath}
Thus, combining the previous observations we conclude that
\begin{displaymath}
\begin{split}
\lim_{\delta\to 0}\frac{\left| T_{j}(\xi+\delta,\eta)-T_{j}(\xi,\eta)-L\delta \right|}{|\delta|}=0.\\
\end{split}
\end{displaymath}
In particular, $T_j(\xi,\eta)$ is differentiable with respect to $\xi$ and $\frac{\partial T_j(\xi,\eta)}{\partial \xi} =L$. Moreover, from~\eqref{eq: def of L} and \ref{H: fn diff first var}, we have that  $\xi \to T_j(\xi,\eta)$ is $C^r$. 

Consequently, since by \eqref{eq: Tn x2} we have that $T_{n-1}(\xi,y(n-1,n,\eta))=x_2(n-1,n,\xi,\eta)$, it follows that $\xi \to x_2(n-1,n,\xi,\eta)$ is $C^r$. Finally, proceeding inductively we conclude that $\xi \to x_2(k,n,\xi,\eta)$ is $C^r$ for every $k<n$. Therefore, since the desired conclusion clearly holds for $k=n$,  the proof of the proposition is completed.
\end{proof}

\begin{theorem}\label{theo: reg H bar}
Suppose that \ref{H: fn is Lipschitz and bounded}, \ref{H: N finite}, \ref{H: q smaller 1}, \ref{H: Aj gamma j smaller 1} and \ref{H: fn diff first var} hold. Moreover, given $n\in \Z$ suppose that hypothesis \ref{H: Kn+Jn finite} is satisfied for $n$. Then, the map $\xi \to \bar{H}_n(\xi,\eta)$ is $C^1$.
\end{theorem}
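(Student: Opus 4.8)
The plan is to write $\bar H_n$ via an explicit series and then differentiate it term by term in $\xi$, with a fixed $\eta\in Y$ throughout. The first step is to record the explicit form of the conjugacy. If $k\mapsto(x_k,y_k)$ is a solution of \eqref{nnd}, then requiring that $k\mapsto\bar H_k(x_k,y_k)=(x_k+\bar h_k(x_k,y_k),y_k)$ be a solution of \eqref{lnd} is equivalent to asking that $w_k:=\bar h_k(x_k,y_k)$ be a bounded solution of the linear inhomogeneous recursion $w_{k+1}=A_kw_k-f_k(x_k,y_k)$; by the discrete variation-of-constants formula with the Green function $\mathcal G$ this means
\[
\bar h_n(\xi,\eta)=-\sum_{k\in\Z}\mathcal G(n,k+1)f_k\big(x_2(k,n,\xi,\eta),\,y(k,n,\eta)\big),
\]
since the solution of \eqref{nnd} issuing from $(\xi,\eta)$ at time $n$ is $k\mapsto(x_2(k,n,\xi,\eta),y(k,n,\eta))$; in other words $\bar h_n$ is exactly the conjugacy produced in \cite{BDP}. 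Because $\bar H_n(\xi,\eta)=(\xi+\bar h_n(\xi,\eta),\eta)$, it then suffices to prove that $\xi\mapsto\bar h_n(\xi,\eta)$ is $C^1$.

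Next I would show that each summand is $C^1$ in $\xi$ and bound its $\xi$-derivative uniformly. By Proposition~\ref{prop: diff of sol} the map $\xi\mapsto x_2(k,n,\xi,\eta)$ is $C^r$, and by~\ref{H: fn diff first var} so is $u\mapsto f_k(u,y(k,n,\eta))$; hence, by the chain rule, the $k$-th summand is $C^1$ with $\xi$-derivative
\[
\mathcal G(n,k+1)\,\frac{\partial f_k}{\partial u}\big(x_2(k,n,\xi,\eta),y(k,n,\eta)\big)\,\frac{\partial x_2}{\partial\xi}(k,n,\xi,\eta).
\]
The computation \eqref{eq: bound dif of f} gives $\big|\tfrac{\partial f_k}{\partial u}\big|\le\gamma_k$, while Lemma~\ref{lemma: sol lips} says $\xi\mapsto x_2(k,n,\xi,\eta)$ is $C_{k,n}$-Lipschitz, so $\big|\tfrac{\partial x_2}{\partial\xi}(k,n,\xi,\eta)\big|\le C_{k,n}$. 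Therefore the $k$-th derivative term is bounded in operator norm by $|\mathcal G(n,k+1)|\gamma_kC_{k,n}$, uniformly in $\xi$.

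The conclusion then follows by term-by-term differentiation. Splitting the sum over $k<n$, $k=n$, $k>n$ and unwinding the definitions of $C_{k,n}$, $K_n$ and $J_n$ shows
\[
\sum_{k\in\Z}|\mathcal G(n,k+1)|\gamma_kC_{k,n}=K_n+J_n+|\mathcal G(n,n+1)|\gamma_n,
\]
which is finite by hypothesis~\ref{H: Kn+Jn finite}. Thus the series of $\xi$-derivatives converges absolutely and uniformly in $\xi$, while the series defining $\bar h_n$ converges uniformly in $\xi$ by~\ref{H: N finite} (with the reindexing $n\mapsto k+1$). The standard theorem on term-by-term differentiation of a uniformly convergent series of differentiable maps between Banach spaces then yields that $\xi\mapsto\bar h_n(\xi,\eta)$ is differentiable with derivative equal to the sum of the termwise derivatives; since each of these is continuous in $\xi$ (as $f_k$ and $x_2$ are $C^r$, $r\ge1$) and the convergence is uniform, the derivative is continuous, so $\bar h_n$, and hence $\bar H_n$, is $C^1$ in the first variable.

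Everything except the displayed identity in the last paragraph is bookkeeping; the crux of the argument is recognizing that the quantity controlling uniform convergence of the differentiated series is precisely $K_n+J_n+|\mathcal G(n,n+1)|\gamma_n$ — which is exactly why hypothesis~\ref{H: Kn+Jn finite} is phrased as it is — and the one step that needs a little care is passing from the Lipschitz bound of Lemma~\ref{lemma: sol lips} to the operator-norm estimate $\big|\tfrac{\partial x_2}{\partial\xi}(k,n,\xi,\eta)\big|\le C_{k,n}$ using the differentiability supplied by Proposition~\ref{prop: diff of sol}. I would also note that only $C^1$ is claimed here: differentiating the series a second time would require bounds on higher-order derivatives of the $f_k$, which are not among the present hypotheses.
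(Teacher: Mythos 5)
Your proposal is correct and follows essentially the same route as the paper: recall the series representation of $\bar h_n$ from \cite{BDP}, differentiate term by term using Proposition~\ref{prop: diff of sol} and \ref{H: fn diff first var}, bound the $k$-th derivative term by $|\mathcal G(n,k+1)|\gamma_k C_{k,n}$ via \eqref{eq: bound dif of f} and Lemma~\ref{lemma: sol lips}, and invoke \ref{H: Kn+Jn finite} for uniform convergence of the differentiated series. The only additions beyond the paper's argument are the explicit identification of the sum with $K_n+J_n+|\mathcal G(n,n+1)|\gamma_n$ and the remark that the undifferentiated series converges uniformly by \ref{H: N finite}, both of which are harmless elaborations.
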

\begin{proof}
Fix $n\in \Z$. We start by recalling (see the proof of \cite[Theorem 3.1]{BDP}) that 
\[ \bar H_n(\xi,\eta)=(\xi+\bar h_n(\xi,\eta),\eta),\]
where
\begin{equation}\label{eq: hn bar}
\bar h_n(\xi, \eta)=-\sum_{k\in \Z} \mathcal G(n, k+1)f_{k}(x_2(k, n, \xi, \eta), y(k, n, \eta)),
\end{equation}
for $(\xi, \eta)\in X\times Y$. Then, it remains to prove that  the map $\xi\to \bar h_n(\xi,\eta)$ is $C^1$.

By hypothesis \ref{H: fn diff first var} and Proposition \ref{prop: diff of sol}, we have that each of the terms involved in the series \eqref{eq: hn bar} is differentiable with respect to $\xi$. Moreover, by \eqref{eq: bound dif of f} we have that
\begin{displaymath}
\begin{split}
\left|\frac{\partial f_k(u,v)}{\partial u}\right|&\leq \gamma_k.
\end{split}
\end{displaymath}
Similarly, using Lemma \ref{lemma: sol lips} we conclude that
\begin{displaymath}
\begin{split}
\left|\frac{\partial x_2(k, n, \xi, \eta)}{\partial \xi}\right|&=\lim_{\delta\to 0}\frac{|x_2(k, n, \xi+\delta, \eta)-x_2(k, n, \xi, \eta)|}{|\delta|}\\
&\leq \lim_{\delta\to 0}\frac{C_{k,n}|\delta|}{|\delta|}\\
& =C_{k,n}.
\end{split}
\end{displaymath}
Combining these observations with \ref{H: Kn+Jn finite}, it follows that
\begin{equation}\label{17}
\begin{split}
&\sum_{k\in \Z} \left| \frac{\partial}{\partial \xi}  \mathcal G(n, k+1)f_{k}(x_2(k, n, \xi, \eta), y(k, n, \eta))\right| 
\\
&\leq \sum_{k\in \Z}| \mathcal G(n, k+1)|\left| \frac{\partial f_{k}(x_2(k, n, \xi, \eta), y(k, n, \eta))}{\partial u}\right| \left|\frac{\partial x_2(k, n, \xi, \eta)}{\partial \xi}\right|\\
&\leq \sum_{k\in \Z}| \mathcal G(n, k+1)|\gamma_k C_{k,n}<+\infty.\\
\end{split}
\end{equation}
In particular, the series
\begin{displaymath}
-\sum_{k\in \Z}  \frac{\partial}{\partial \xi}  \mathcal G(n, k+1)f_{k}(x_2(k, n, \xi, \eta), y(k, n, \eta))
\end{displaymath}
converges uniformly. Thus, it coincides with $\frac{\partial \bar h_n(\xi, \eta) }{\partial \xi}$ and $\xi\to \bar h_n(\xi,\eta)$ is $C^1$. Consequently, $\xi\to \bar H_n(\xi,\eta)$ is $C^1$ as claimed.
\end{proof}

\begin{theorem}\label{theo: reg H}
Suppose hypothesis \ref{H: fn is Lipschitz and bounded}, \ref{H: N finite}, \ref{H: q smaller 1}, \ref{H: Aj gamma j smaller 1} and \ref{H: fn diff first var} are satisfied. Moreover, given $n\in \Z$ assume that the  hypothesis \ref{H: Kn+Jn smaller 1} is satisfied for $n$. Then, the map $\xi \to H_n(\xi,\eta)$ is $C^1$.
\end{theorem}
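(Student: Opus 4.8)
The plan is to derive the $C^1$ regularity of $\xi\mapsto H_n(\xi,\eta)$ from that of $\xi\mapsto\bar H_n(\xi,\eta)$ via the inverse function theorem, the point being that the quantities appearing in \ref{H: Kn+Jn smaller 1} are exactly sharp enough to make the relevant derivative a contraction, so that $\xi\mapsto\xi+\bar h_n(\xi,\eta)$ becomes a genuine $C^1$-diffeomorphism of $X$ whose inverse is $\xi\mapsto\xi+h_n(\xi,\eta)$.

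First I would note that \ref{H: Kn+Jn smaller 1} implies \ref{H: Kn+Jn finite}: the term $|\mathcal G(n,n+1)|\gamma_n$ is nonnegative, so $K_n+J_n<1$, and in particular both series converge. Hence Theorem~\ref{theo: reg H bar} applies and $\xi\mapsto\bar H_n(\xi,\eta)=(\xi+\bar h_n(\xi,\eta),\eta)$ is $C^1$; equivalently, $\xi\mapsto\bar h_n(\xi,\eta)$ is $C^1$. Next I would estimate its derivative: by the computations in the proof of Theorem~\ref{theo: reg H bar} (see \eqref{17}, which rests on \eqref{eq: bound dif of f} and Lemma~\ref{lemma: sol lips}), each term of the uniformly convergent series representing $\partial\bar h_n(\xi,\eta)/\partial\xi$ has norm at most $|\mathcal G(n,k+1)|\gamma_k C_{k,n}$, whence
\[
\left|\frac{\partial\bar h_n(\xi,\eta)}{\partial\xi}\right|\le\sum_{k\in\Z}|\mathcal G(n,k+1)|\gamma_k C_{k,n}=K_n+|\mathcal G(n,n+1)|\gamma_n+J_n,
\]
where the last equality follows by splitting the sum into $k<n$, $k=n$, $k>n$ and comparing with the definitions of $C_{k,n}$, $K_n$ and $J_n$. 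By \ref{H: Kn+Jn smaller 1} this is $<1$, uniformly in $\xi$; consequently $\Id+\partial\bar h_n(\xi,\eta)/\partial\xi$ is invertible, with uniformly bounded inverse, for every $\xi\in X$.

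It then remains to package this. The map $\Phi\colon\xi\mapsto\xi+\bar h_n(\xi,\eta)$ is the first coordinate of $\bar H_n(\cdot,\eta)$, hence $C^1$ with derivative $\Id+\partial\bar h_n(\xi,\eta)/\partial\xi$, which is invertible at every point by the previous step. By Theorem~\ref{theo: BDP disc}, $H_n$ and $\bar H_n$ are mutual inverses and both fix the $Y$-coordinate, so $\Phi$ is a bijection of $X$ whose set-theoretic inverse is $\xi\mapsto\xi+h_n(\xi,\eta)$. A $C^1$ bijection of a Banach space whose derivative is everywhere invertible is a $C^1$-diffeomorphism (cover the domain by the neighbourhoods furnished by the local inverse function theorem and glue the resulting local inverses using global injectivity). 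Therefore $\Phi^{-1}\colon\xi\mapsto\xi+h_n(\xi,\eta)$ is $C^1$, and hence $\xi\mapsto H_n(\xi,\eta)=(\xi+h_n(\xi,\eta),\eta)$ is $C^1$, as claimed.

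The only step that requires real care is the derivative estimate: one must recognise that the termwise bound coming from \eqref{eq: bound dif of f} and Lemma~\ref{lemma: sol lips} reassembles exactly into $K_n+|\mathcal G(n,n+1)|\gamma_n+J_n$, so that it is the strict inequality in \ref{H: Kn+Jn smaller 1} — rather than the mere finiteness in \ref{H: Kn+Jn finite} — that is used. Everything afterwards is a soft application of the inverse function theorem and needs no analytic input beyond Theorem~\ref{theo: reg H bar}. (If one prefers to avoid invoking a global inverse function theorem, the same conclusion follows by applying the implicit function theorem to $G(\xi,\zeta):=\zeta+\bar h_n(\xi+\zeta,\eta)$: since $\partial G/\partial\zeta=\Id+\partial\bar h_n/\partial\xi$ is invertible, $G=0$ can be solved for $\zeta$ locally in $C^1$ near any given $\xi_0$, and the local solution coincides with the continuous function $h_n(\cdot,\eta)$ supplied by Theorem~\ref{theo: BDP disc} by local uniqueness.)
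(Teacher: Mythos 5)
Your proof is correct, and it rests on exactly the same key estimate as the paper's: both arguments deduce from \eqref{17} together with \ref{H: Kn+Jn smaller 1} that $\left|\partial \bar h_n(\xi,\eta)/\partial \xi\right|\le K_n+|\mathcal G(n,n+1)|\gamma_n+J_n<1$, hence that $\Id+\partial \bar h_n(\xi,\eta)/\partial \xi$ is invertible, and both use Theorem~\ref{theo: BDP disc} to identify $\xi\mapsto\xi+h_n(\xi,\eta)$ as the inverse of $\xi\mapsto\xi+\bar h_n(\xi,\eta)$. Where you genuinely diverge is the final step. You invoke the global inverse function theorem (a $C^1$ bijection of a Banach space with everywhere invertible derivative is a $C^1$-diffeomorphism), whereas the paper proves differentiability of the inverse by hand: it writes down the candidate derivative $R=-\bigl(\Id+\partial\bar h_n/\partial u\bigr)^{-1}\partial\bar h_n/\partial u$ evaluated at $\xi+h_n(\xi,\eta)$ and runs a direct $\mathbf{o}(\delta)$ estimate on the identity $h_n(\xi,\eta)=-\bar h_n(\xi+h_n(\xi,\eta),\eta)$, using the continuity of $h_n$ to upgrade $\mathbf{o}(\delta+h_n(\xi+\delta,\eta)-h_n(\xi,\eta))$ to $\mathbf{o}(\delta)$. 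Your route is shorter and cleaner if one accepts the global IFT as a black box (your covering-and-gluing sketch is the standard justification, and the set-theoretic inverse is indeed $\xi\mapsto\xi+h_n(\xi,\eta)$ by Theorem~\ref{theo: BDP disc}); the paper's route is self-contained and, as a by-product, produces the difference-quotient manipulation that is reused almost verbatim in the proof of Theorem~\ref{theo: reg H second var}. Your explicit remark that \ref{H: Kn+Jn smaller 1} implies \ref{H: Kn+Jn finite}, so that Theorem~\ref{theo: reg H bar} is applicable, is a point the paper leaves implicit, and your identification of $\sum_{k\in\Z}|\mathcal G(n,k+1)|\gamma_k C_{k,n}$ with $K_n+|\mathcal G(n,n+1)|\gamma_n+J_n$ is exactly right.
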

\begin{proof}
Recall that by Theorem \ref{theo: BDP disc}, $H_n(\xi, \eta)=(\xi+h_n(\xi, \eta), \eta)$. Hence, it remains to prove  that the map $\xi\to h_n(\xi, \eta)$ is $C^1$. We start by observing that hypothesis \ref{H: Kn+Jn smaller 1} and the estimates from the proof of Theorem \ref{theo: reg H bar} (see~\eqref{17}) imply that $\left|\frac{\partial \bar h_n(\xi, \eta) }{\partial \xi}\right|<1$, for every $(\xi,\eta)\in X\times Y$. In particular, the operator $\Id +\frac{\partial \bar h_n(\xi, \eta) }{\partial \xi}$ is invertible and we may consider
\begin{equation}\label{eq: def of R}
R:=-\left(\Id +\frac{\partial \bar h_n(\xi +h_n(\xi, \eta), \eta) }{\partial u}\right)^{-1}\frac{\partial \bar h_n(\xi +h_n(\xi, \eta),\eta) }{\partial u},
\end{equation}
where $\frac{\partial \bar h_n(\cdot,\cdot) }{\partial u}$ denotes the derivative of $\bar h_n(\cdot,\cdot)$ with respect to the first variable. 

We  now claim that $\xi\to h_n(\xi, \eta)$ is differentiable and that $\frac{\partial h_n(\xi, \eta) }{\partial \xi}=R$. We first note that Theorem~\ref{theo: BDP disc} implies that  $\bar{H}_n(H_n(\xi,\eta))=(\xi,\eta)$, and consequently 
\begin{equation}\label{2131}
h_n(\xi,\eta)=-\bar h_n(\xi+h_n(\xi,\eta),\eta), \quad \text{for every $(\xi,\eta)\in X\times Y$.}
\end{equation}
 Moreover, since by Theorem~\ref{theo: reg H bar} $\bar{h}_n$ is differentiable,   we have that
\begin{displaymath}
\begin{split}
&h_n(\xi+\delta,\eta)-h_n(\xi,\eta)\\
&=-\bar h_n(\xi+\delta+h_n(\xi+\delta,\eta),\eta)+\bar h_n(\xi+h_n(\xi,\eta),\eta)\\
&=-\frac{\partial\bar h_n(\xi+h_n(\xi,\eta),\eta)}{\partial u}\left(\delta+h_n(\xi+\delta,\eta)-h_n(\xi,\eta)\right)\\
&\phantom{=}+\mathbf{o}(\delta+h_n(\xi+\delta,\eta)-h_n(\xi,\eta)),\\
\end{split}
\end{displaymath}
where $l(x)=\mathbf{o}(x)$ means that $\lim_{x \to 0}\frac{|l(x)|}{|x|}=0$. Thus,
\begin{displaymath}
\begin{split}
&\left(\Id+\frac{\partial\bar h_n(\xi+h_n(\xi,\eta),\eta)}{\partial u}\right) \left( h_n(\xi+\delta,\eta)-h_n(\xi,\eta) \right)\\
&=-\frac{\partial\bar h_n(\xi+h_n(\xi,\eta),\eta)}{\partial u}\delta\\
&\phantom{=}+\mathbf{o}(\delta+h_n(\xi+\delta,\eta)-h_n(\xi,\eta)),\\
\end{split}
\end{displaymath}
which implies that
\begin{equation}\label{eq: est hn o}
\begin{split}
h_n(\xi+\delta,\eta)-h_n(\xi,\eta) =R\delta+\mathbf{o}(\delta+h_n(\xi+\delta,\eta)-h_n(\xi,\eta)).\\
\end{split}
\end{equation}
On the other hand, since $\xi \to h_n(\xi,\eta)$ is continuous,
\begin{displaymath}
\mathbf{o}(\delta+h_n(\xi+\delta,\eta)-h_n(\xi,\eta))\leq \frac{1}{2}|\delta +h_n(\xi+\delta,\eta)-h_n(\xi,\eta)|,
\end{displaymath}
whenever $|\delta|$ is sufficiently small. Consequently, by  taking the norm on both sides of \eqref{eq: est hn o} we get that
\begin{equation*}
\begin{split}
|h_n(\xi+\delta,\eta)-h_n(\xi,\eta)|&\leq |R||\delta|+\frac{1}{2}|\delta +h_n(\xi+\delta,\eta)-h_n(\xi,\eta)|\\
&\leq \left(|R|+\frac{1}{2}\right)|\delta|+\frac{1}{2}|h_n(\xi+\delta,\eta)-h_n(\xi,\eta)|,\\
\end{split}
\end{equation*}
whenever $|\delta|$ is sufficiently small. In particular, there exist $C>0$ and $\rho>0$ such that
\begin{displaymath}
\frac{|h_n(\xi+\delta,\eta)-h_n(\xi,\eta)|}{|\delta|}\leq C,  \quad \text{for  $|\delta|<\rho$.}
\end{displaymath}
Thus, whenever $|\delta|<\rho$,  we have that
\begin{equation}\label{eq: final eq est o hn}
\begin{split}
&\frac{\mathbf{o}(\delta+h_n(\xi+\delta,\eta)-h_n(\xi,\eta))}{|\delta|}\\
&=\frac{\mathbf{o}(\delta+h_n(\xi+\delta,\eta)-h_n(\xi,\eta))}{|\delta+h_n(\xi+\delta,\eta)-h_n(\xi,\eta)|}\cdot
\frac{|\delta +h_n(\xi+\delta,\eta)-h_n(\xi,\eta)|}{|\delta|}\\
&\leq (C+1) \frac{\mathbf{o}(\delta+h_n(\xi+\delta,\eta)-h_n(\xi,\eta))}{|\delta+h_n(\xi+\delta,\eta)-h_n(\xi,\eta)|}.
\end{split}
\end{equation}
Consequently, since $\xi \to h_n(\xi,\eta)$ is continuous, we have that
\begin{displaymath}
\lim_{\delta\to 0}|\delta+h_n(\xi+\delta,\eta)-h_n(\xi,\eta)|=0,
\end{displaymath}
and thus
\begin{displaymath}
\lim_{\delta\to 0}\frac{\mathbf{o}(\delta+h_n(\xi+\delta,\eta)-h_n(\xi,\eta))}{|\delta+h_n(\xi+\delta,\eta)-h_n(\xi,\eta)|}=0.
\end{displaymath}
This fact combined with \eqref{eq: final eq est o hn} implies that 
\begin{displaymath}
\mathbf{o}(\delta+h_n(\xi+\delta,\eta)-h_n(\xi,\eta))=\mathbf{o}(\delta)
\end{displaymath}
which together with~\eqref{eq: est hn o} yields that 
\begin{equation*}
\begin{split}
h_n(\xi+\delta,\eta)-h_n(\xi,\eta) =R\delta+\mathbf{o}(\delta).\\
\end{split}
\end{equation*}
Hence, $\xi \to h_n(\xi,\eta)$ is differentiable and  $\frac{\partial h_n(\xi, \eta) }{\partial \xi}=R$. Moreover, it follows from the expression of $R$ given in \eqref{eq: def of R} and the fact that $\xi \to \bar h_n(\xi,\eta)$ is $C^1$ that $\xi \to h_n(\xi,\eta)$ is also $C^1$. The proof of the theorem is completed.
\end{proof}

\begin{example}\label{remm}
It is easy to construct  examples where property \ref{H: Kn+Jn finite} is satisfied for every $n\in \Z$ simultaneously. For instance, suppose that
\[
M:=\sup_{k\in \Z} \max \{ |A_k|, |A_k^{-1}|, |P_k|\}<+\infty.
\]
Without any loss of generality, we may assume that $M\ge 1$.
For $k\in \Z$, take $\gamma_k$ such that $0\le \gamma_k \le \frac{1}{(2M)^{2|k|+1}}$.
 Observe that $\frac{|A_j^{-1}|}{1-\gamma_j|A_j^{-1}|}\le 2M$ for $k\in \Z$. Consequently,
\begin{displaymath}
\begin{split}
K_n&=\sum _{k< n} |\mathcal{G} (n,k+1)|\gamma_k \left(\prod_{j=k}^{n-1} \frac{|A_j^{-1}|}{1-\gamma_j|A_j^{-1}|}\right)\\
&\leq \sum _{k< n} M^{n-k} \frac{1}{(2M)^{2|k|}} (2M)^{n-k}\\
&\leq (2M)^{2|n|}\sum _{k< n} \frac{1}{2^{|k|}} <+\infty,
\end{split}
\end{displaymath}
for every $n\in \Z$. Similarly, since $|A_k|+\gamma_k\le 2M$, it follows that
\begin{displaymath}
\begin{split}
J_n &=\sum _{k> n} |\mathcal{G} (n,k+1)|\gamma_k \left(\prod_{j=n}^{k-1} (|A_j|+\gamma_j)\right) \\
&\le \sum_{k>n} M^{k+2-n}\frac{1}{(2M)^{2|k|}} (2M)^{k-n} \\
&\le M^2(2M)^{2|n|}\sum _{k> n} \frac{1}{2^{|k|}} <+\infty,
\end{split}
\end{displaymath}
for $n\in \Z$.

We conclude that Theorem~\ref{theo: reg H bar} is applicable under mild assumptions on the linear part in~\eqref{lnd}, which in particular  does not have to exhibit any asymptotic behaviour. For example, we can take $A_k=P_k=\Id$ for each $k\in \Z$.

As for \ref{H: Kn+Jn smaller 1}, fix $n\in \Z$ and choose $n_0\in \N$ such that $M^2(2M)^{2|n|}\sum _{k\in\Z} \frac{1}{2^{|k|+n_0}} <1$. Then, if $\gamma_k<\frac{1}{(2M)^{2|k|+n_0}}$ for every $k\in \Z$, our assumptions combined with the previous estimates implies that \ref{H: Kn+Jn smaller 1} is satisfied for $n$.
\end{example}

We now present examples under which~\ref{H: Kn+Jn smaller 1} holds for each $n\in \Z$.

\begin{example}\label{EX1}
We consider the case when $X$ is a product of a Banach space $\mathcal X=(\mathcal X, | \cdot |_{\mathcal X})$ with itself, i.e. $X=\mathcal X\times \mathcal X$. Then, $X$ is a Banach space with respect to the norm $|(x_1, x_2)|:=\max \{ |x_1|_{\mathcal X}, |x_2|_{\mathcal X}\}$, $x_1,x_2\in \mathcal X$. 

Take an arbitrary $\lambda >0$. Let us consider sequences of bounded linear operators $(A_n)_{n\in \Z}$ and $(P_n)_{n\in \Z}$ acting on $X$ given by
\begin{displaymath}
A_n=\begin{pmatrix}
e^\lambda \Id  & 0 \\
0 & e^{-\lambda} \Id \\
\end{pmatrix} \quad \text{and} \quad
P_n=\begin{pmatrix}
0 & 0 \\
0 & \Id \\
\end{pmatrix},
\end{displaymath}
for $n\in \Z$, where $\Id$ denotes the identity operator on $\mathcal X$.

Then,
\[
\mathcal G(m, n)=\begin{cases}
\begin{pmatrix} 
0 & 0 \\
0 & e^{-\lambda (m-n)}\Id
\end{pmatrix}  & \text{for $m\ge n$;} \\
-\begin{pmatrix}
e^{-\lambda (n-m)}\Id & 0 \\
0 &0\\
\end{pmatrix} &\text{for $m<n$.}
\end{cases}
\]
Consequently,
\[
|\mathcal G(m, n)|=e^{-\lambda |m-n|}.
\]
Set
\[
M:=\prod_{j\in \Z}\left (1+e^{-\lambda |j|} \right )<+\infty.
\]
Now, let us consider a sequence $(\gamma_k)_{k\in \Z}$ of nonnegative numbers such that:
\begin{equation}\label{1154}
\gamma_k \le \frac{1}{e^{\lambda (|k|+1)}+e^{\lambda}} \quad \text{for  $k\in \Z$},
\end{equation}
and
\begin{equation}\label{1155}
\sum_{k\in \Z}\gamma_k <\frac{1}{e^\lambda M}.
\end{equation}
Observe that it follows from~\eqref{1154} that
\begin{equation}\label{1156}
\frac{1}{1-\gamma_k}\le \frac{1}{1-e^\lambda \gamma_k} \le 1+e^{-\lambda |k|}, \quad k\in \Z.
\end{equation}

Then,  using~\eqref{1156}  we have that 
\begin{displaymath}
\begin{split}
K_n&=\sum _{k< n} |\mathcal{G} (n,k+1)|\gamma_k \left(\prod_{j=k}^{n-1} \frac{|A_j^{-1}|}{1-\gamma_j|A_j^{-1}|}\right)\\
&\leq \sum _{k< n} e^{-\lambda (n-k-1)} \gamma_k \left(\prod_{j=k}^{n-1} \frac{e^\lambda}{1-e^\lambda \gamma_j}\right) \\
&\leq \sum _{k< n} e^{-\lambda (n-k-1)} \gamma_k e^{\lambda (n-k)} \left(\prod_{j=k}^{n-1} \frac{1}{1-e^\lambda \gamma_j}\right) \\
&\leq e^\lambda \sum _{k< n} \gamma_k \left(\prod_{j=k}^{n-1} \frac{1}{1-e^\lambda \gamma_j}\right) \\
&\le e^\lambda \sum _{k< n} \gamma_k \left(\prod_{j=k}^{n-1} \left (1+e^{-\lambda |j|} \right ) \right)  \\
&\leq e^\lambda M \sum _{k< n} \gamma_k. \\
\end{split}
\end{displaymath}

Similarly, since $1+e^{-\lambda}\gamma_j\leq 1+\gamma_j\leq \frac{1}{1-\gamma_j}$, it follows from~\eqref{1156} that 
\[
\begin{split}
J_n &=\sum _{k> n} |\mathcal{G} (n,k+1)|\gamma_k \left(\prod_{j=n}^{k-1} (|A_j|+\gamma_j)\right) \\
&\leq \sum _{k> n} e^{-\lambda (k+1-n)} \gamma_k \left(\prod_{j=n}^{k-1} (e^\lambda +\gamma_j)\right) \\
&= \sum _{k> n} e^{-\lambda (k+1-n)} \gamma_k e^{\lambda (k-n)} \left(\prod_{j=n}^{k-1} (1+e^{-\lambda} \gamma_j)\right) \\
&\leq e^{-\lambda} M\sum _{k> n} \gamma_k \\
&\le e^{\lambda} M\sum _{k> n} \gamma_k.
\end{split}
\]

Finally, since $M\ge 1$ we have that  $|\mathcal{G} (n,n+1)|\gamma_n=e^{-\lambda} \gamma_n\le e^\lambda M\gamma_n$. Thus, from~\eqref{1155} we conclude that 
\[K_n+J_n+|\mathcal{G} (n,n+1)|\gamma_n\le e^\lambda M \sum_{n\in \Z}\gamma_n <1,\]
for each $n\in \Z$.
\end{example}

\begin{example}\label{EX2} 
Let $X$ be as in Example~\ref{EX1}. Furthermore, take a sequence of isometries
 $(B_n)_{n\in \Z}$   acting $\mathcal X$ and let $(\theta_n)_{n\in \Z}$ be any sequence of numbers satisfying $\theta_n \ge 1$ for every $n\in \Z$. Moreover, suppose that $\theta_n\leq \theta_{n+1}$ and that there exists a constant $T\geq 1$ such that $\frac{\theta_{n+1}}{\theta_n}\leq T$ for every $n\in\Z$. We now consider sequences  of bounded linear operators $(A_n)_{n\in \Z}$ and $(P_n)_{n\in \Z}$ acting on $X$ given by
\begin{displaymath}
A_n=\begin{pmatrix}
\frac{\theta_n}{\theta_{n+1}}\Id  & 0 \\
0 & B_n \\
\end{pmatrix} \quad \text{and} \quad
P_n=\begin{pmatrix}
\Id & 0 \\
0 & 0 \\
\end{pmatrix},
\end{displaymath}
for $n\in \Z$.
Then,
\[
\mathcal G(m, n)=\begin{cases}
\begin{pmatrix} 
\frac{\theta_n}{\theta_{m}}\Id &  0 \\
0 & 0  \\
\end{pmatrix}  & \text{for $m\ge n$;} \\
-\begin{pmatrix}
0 & 0 \\
0 &\mathcal{B}(m,n)\\
\end{pmatrix} &\text{for $m<n$,}
\end{cases}
\]
where 
\[
\mathcal B (m, n)=\begin{cases}
B_{m-1}\cdots B_n & \text{for $m>n$;}\\
\Id  &\text{for $m=n$;} \\
B_m^{-1}\ldots B_{n-1}^{-1}& \text{for $m<n$.}\\
\end{cases}
\]
Consequently,
\[
|\mathcal G(m, n)|=\begin{cases}
\frac{\theta_n}{\theta_{m}}  & \text{for $m\ge n$;} \\
1 &\text{for $m<n$.}
\end{cases}
\]
Set
\[
M:=\prod_{j\in \Z}\left(1+\frac{1}{2^{|j|}}\right)<+\infty.
\]

Now, let us consider a sequence $(\gamma_k)_k$ of nonnegative numbers such that:
\begin{equation}\label{2055}
\gamma_k<\frac{1}{T(2^{|k|+1}+1)} \quad \text{for $k\in \Z$,}
\end{equation}
and
\begin{equation}\label{2056}
\sum_{k\in \Z}\gamma_k <\frac{1}{TM}.
\end{equation}
Observe that it follows from~\eqref{2055} that 
\begin{equation}\label{2059}
\frac{1}{1-\gamma_k}\leq \frac{1}{1-T\gamma_k}\leq 1+\frac{1}{2^{|k|}}, \quad k\in \Z.
\end{equation}

Then, observing that $|A^{-1}_j|=\frac{\theta_{j+1}}{\theta_{j}}$ and using~\eqref{2059}, we get that
\begin{displaymath}
\begin{split}
K_n&=\sum _{k< n} |\mathcal{G} (n,k+1)|\gamma_k \left(\prod_{j=k}^{n-1} \frac{|A_j^{-1}|}{1-\gamma_j|A_j^{-1}|}\right)\\
&= \sum _{k< n} \frac{\theta_{k+1} }{\theta_n}\gamma_k \left(\prod_{j=k}^{n-1} \frac{\left(\frac{\theta_{j+1}}{\theta_{j}}\right)}{1-\left(\frac{\theta_{j+1}}{\theta_{j}}\right)\gamma_j}\right) \\
&\leq \sum _{k< n} \frac{\theta_{k+1} }{\theta_n}\gamma_k \left(\prod_{j=k}^{n-1} \frac{\theta_{j+1}}{\theta_{j}}\left(\frac{1}{1-T\gamma_j}\right)\right) \\
&= \sum _{k< n} \frac{\theta_{k+1} }{\theta_n}\gamma_k \frac{\theta_n}{\theta_k} \left(\prod_{j=k}^{n-1} \frac{1}{1-T\gamma_j}\right) \\
&\leq \sum _{k< n} TM\gamma_k.  \\
\end{split}
\end{displaymath}

Similarly, observing that $|A_j|=1$ and $1+\gamma_j\leq \frac{1}{1-\gamma_j}\leq \frac{1}{1-T\gamma_j}$, we have that 
\[
\begin{split}
J_n &=\sum _{k> n} |\mathcal{G} (n,k+1)|\gamma_k \left(\prod_{j=n}^{k-1} (|A_j|+\gamma_j)\right) \\
&\leq \sum _{k> n} \gamma_k \left(\prod_{j=n}^{k-1} (1+\gamma_j)\right) \\
&\leq M\sum _{k> n} \gamma_k \\
&\leq TM\sum _{k> n} \gamma_k .
\end{split}
\]
Finally, since $M, T\geq 1$, it follows that $|\mathcal{G} (n,n+1)|\gamma_n=\gamma_n\le TM\gamma_n$. Thus, from~\eqref{2056} we conclude that 
\[K_n+J_n+|\mathcal{G} (n,n+1)|\gamma_n \le TM\sum_{k\in \Z} \gamma_k<1,\]
 for every $n\in \Z$. 
\end{example}

\subsection{An important particular case}

We will now restrict to the case when $Y$ is a trivial Banach space, i.e. $Y=\{0\}$. In this case, \eqref{nnd} reduces to
\begin{equation}\label{nndd}
x_{n+1}=A_n x_n+f_n(x_n),
\end{equation}
where $f_n\colon X\to X$, while~\eqref{lnd} is equivalent to
\begin{equation}\label{lndd}
x_{n+1}=A_n x_n.
\end{equation}

The following result (originally established in~\cite{RS}) is a consequence of Theorem~\ref{theo: BDP disc}.
\begin{corollary}
Suppose that \ref{H: fn is Lipschitz and bounded}, \ref{H: N finite} and \ref{H: q smaller 1} are satisfied. Then, there exists a sequence of continuous functions $H_n\colon X\to X$, $n\in \Z$ of the form $H_n(x)=x+h_n(x)$, where  $\sup_{n,x}|h_n(x)|<\infty$, such that if $n\mapsto x_n$ is a solution of (\ref{lndd}),
then $n\mapsto H_n(x_n)$ is a solution of (\ref{nndd}). In addition, there exists a sequence of continuous functions $\bar H_n \colon X \to X$, $n\in \Z$ of the form $\bar H_n( x)=x+\bar h_n(x)$, where $\sup_{n,x}|\bar h_n(x)|<\infty$,  such that if $n\mapsto x_n$ is a solution of (\ref{nndd}),
then $n\mapsto \bar H_n(x_n)$ is a solution of (\ref{lndd}). Moreover, $H_n$ and $\bar H_n$ are inverses of
each other.

\end{corollary}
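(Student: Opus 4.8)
The plan is to obtain the corollary as the special case of Theorem~\ref{theo: BDP disc} in which $Y$ is the trivial Banach space $\{0\}$ (so that each $g_n$ is forced to be the identity on $\{0\}$). With this choice the coupled system \eqref{nnd} degenerates to \eqref{nndd}, the uncoupled system \eqref{lnd} degenerates to \eqref{lndd}, and every $f_n\colon X\times\{0\}\to X$ is canonically identified with a map $f_n\colon X\to X$. Under this identification the quantities $\mu_n$, $\gamma_n$, $N$ and $q$ are unchanged, so hypothesis \ref{H: fn is Lipschitz and bounded} becomes precisely $|f_n(x)|\le\mu_n$ together with $|f_n(x_1)-f_n(x_2)|\le\gamma_n|x_1-x_2|$, while \ref{H: N finite} and \ref{H: q smaller 1} are verbatim the remaining two hypotheses of the corollary. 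Theorem~\ref{theo: BDP disc} then provides maps $H_n,\bar H_n\colon X\times\{0\}\to X\times\{0\}$ of the form $H_n(x,0)=(x+h_n(x,0),0)$ and $\bar H_n(x,0)=(x+\bar h_n(x,0),0)$; identifying $X\times\{0\}$ with $X$ and writing $h_n(x):=h_n(x,0)$ and $\bar h_n(x):=\bar h_n(x,0)$ yields maps of the required form on $X$. Since $n\mapsto x_n$ is a solution of \eqref{lndd} (resp.\ \eqref{nndd}) if and only if $n\mapsto(x_n,0)$ is a solution of \eqref{lnd} (resp.\ \eqref{nnd}), the conjugacy statements, the uniform bounds $\sup_{n,x}|h_n(x)|<\infty$ and $\sup_{n,x}|\bar h_n(x)|<\infty$, and the mutual-inverse identity $H_n\circ\bar H_n=\Id=\bar H_n\circ H_n$ are read off directly from the conclusion of Theorem~\ref{theo: BDP disc}.

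The only point requiring attention — and what I expect to be the main obstacle — is that Theorem~\ref{theo: BDP disc} also assumes hypothesis \ref{H: Aj gamma j smaller 1}, which is absent from the statement of the corollary. In that theorem (following \cite{BDP}) this condition is used only to guarantee that the nonlinear system is globally solvable in backward time, i.e.\ to define the maps $T_j$ of \eqref{Tj} and hence the backward orbits $x_2(k,n,\cdot)$ with $k<n$ that enter the ``unstable'' part $\sum_{k<n}\mathcal G(n,k+1)f_{k}(x_2(k,n,\cdot))$ of the series \eqref{eq: hn bar} defining $\bar h_n$. If one simply keeps \ref{H: Aj gamma j smaller 1} among the standing hypotheses — as in the original statement recalled from \cite{RS} — then there is nothing further to do. To dispense with it, I would appeal to the self-contained argument of \cite{RS}: in the trivial-$Y$ situation the forward nonlinear solutions are automatically globally defined, the conjugacy $H_n$ can be produced from a fixed-point equation involving only the globally invertible linear cocycle $(\mathcal A(m,n))$ (hence already under \ref{H: fn is Lipschitz and bounded}--\ref{H: q smaller 1}), and $\bar H_n$ is constructed by the route of \cite{RS} that does not pass through the backward nonlinear flow; the mutual-inverse property is then recovered, as usual, from the uniqueness lemma for bounded maps conjugating \eqref{lndd} to itself. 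In either reading, the bookkeeping of the first paragraph reduces the corollary to Theorem~\ref{theo: BDP disc} up to this single hypothesis, and it is the treatment of \ref{H: Aj gamma j smaller 1} — importing it, or circumventing it via \cite{RS} — that carries whatever genuine content there is.
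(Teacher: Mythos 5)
Your reduction is exactly the paper's argument: the corollary is obtained by specializing Theorem~\ref{theo: BDP disc} to the trivial space $Y=\{0\}$, under which \eqref{nnd} and \eqref{lnd} collapse to \eqref{nndd} and \eqref{lndd}, and the paper offers no proof beyond this observation. Your remark that \ref{H: Aj gamma j smaller 1} is a hypothesis of Theorem~\ref{theo: BDP disc} but is absent from the corollary is well taken --- as stated, the direct specialization does need \ref{H: Aj gamma j smaller 1} to define the backward nonlinear orbits entering $\bar h_n$, so one must either import that hypothesis, as you suggest, or fall back on the original construction of \cite{RS}.
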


The following result is a direct consequence of Theorems~\ref{theo: reg H bar} and~\ref{theo: reg H}.
\begin{corollary}\label{newcor}
Suppose hypothesis \ref{H: fn is Lipschitz and bounded}, \ref{H: N finite}, \ref{H: q smaller 1}, \ref{H: Aj gamma j smaller 1} and \ref{H: fn diff first var} are satisfied. Moreover,  assume that the  hypothesis \ref{H: Kn+Jn smaller 1} is satisfied for each $n\in \Z$. Then, 
$H_n$ and $\bar H_n$ are of class  $C^1$ for every $n\in \Z$.
\end{corollary}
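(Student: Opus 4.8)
The plan is to deduce Corollary~\ref{newcor} directly from the two theorems already established, Theorem~\ref{theo: reg H bar} and Theorem~\ref{theo: reg H}, by observing that in the degenerate situation $Y=\{0\}$ most of the hypotheses are either automatic or harmless. First I would note that when $Y$ is trivial, the second variable $\eta$ ranges over the one-point set $\{0\}$, so the maps $\xi\mapsto H_n(\xi,0)$ and $\xi\mapsto \bar H_n(\xi,0)$ are literally the maps $H_n$ and $\bar H_n$ appearing in the preceding corollary (via the identification $H_n(x)=x+h_n(x)$ with $h_n(x)=h_n(x,0)$, and similarly for $\bar h_n$). Smoothness of $\xi\mapsto H_n(\xi,0)$ as a map $X\to X$ is exactly smoothness of $H_n\colon X\to X$, since a $C^1$ map on $X\times\{0\}$ is the same thing as a $C^1$ map on $X$.

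Next I would check that the full list of hypotheses needed for Theorems~\ref{theo: reg H bar} and~\ref{theo: reg H} is implied by the hypotheses of the corollary together with the triviality of $Y$. The conditions \ref{H: fn is Lipschitz and bounded}, \ref{H: N finite}, \ref{H: q smaller 1}, \ref{H: Aj gamma j smaller 1} and \ref{H: fn diff first var} are assumed outright, as is \ref{H: Kn+Jn smaller 1} for every $n\in\Z$. The only subtlety is that Theorem~\ref{theo: reg H bar} requires \ref{H: Kn+Jn finite}; but \ref{H: Kn+Jn smaller 1} trivially implies \ref{H: Kn+Jn finite} (the three nonnegative quantities $K_n$, $J_n$ and $|\mathcal G(n,n+1)|\gamma_n$ summing to something less than $1$ forces each of $K_n$, $J_n$ to be finite), so all the hypotheses of Theorem~\ref{theo: reg H bar} hold for every $n$ as well. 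Hence, applying Theorem~\ref{theo: reg H bar} with $\eta=0$ gives that $\xi\mapsto\bar H_n(\xi,0)$ is $C^1$, i.e.\ $\bar H_n$ is $C^1$; and applying Theorem~\ref{theo: reg H} with $\eta=0$ gives that $\xi\mapsto H_n(\xi,0)$ is $C^1$, i.e.\ $H_n$ is $C^1$. Since $n\in\Z$ was arbitrary, this holds for every $n$.

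I do not expect any genuine obstacle here: the corollary is purely a matter of specialization, so the proof amounts to (i) recording the identification of the coupled conjugacies restricted to $X\times\{0\}$ with the uncoupled conjugacies $H_n,\bar H_n$, (ii) verifying the implication \ref{H: Kn+Jn smaller 1}$\Rightarrow$\ref{H: Kn+Jn finite}, and (iii) quoting the two theorems. If anything requires a word of care it is point (i) — one should make explicit that $h_n(x,0)$ and $\bar h_n(x,0)$ are precisely the bump functions $h_n(x)$, $\bar h_n(x)$ from the corollary, which follows from the construction in the proof of \cite[Theorem~3.1]{BDP} since the defining series \eqref{eq: hn bar} reduces, when $Y=\{0\}$, to the corresponding series in the one-variable setting of \cite{RS}. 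With that observation in hand the statement follows immediately, and I would phrase the proof in two or three sentences.
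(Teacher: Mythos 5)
Your proposal is correct and matches the paper's treatment: the paper gives no argument beyond declaring the corollary a direct consequence of Theorems~\ref{theo: reg H bar} and~\ref{theo: reg H}, which is precisely your specialization to $Y=\{0\}$ together with the (valid) observation that \ref{H: Kn+Jn smaller 1} implies \ref{H: Kn+Jn finite}. Your write-up simply makes explicit the details the authors left implicit.
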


\begin{remark}\label{EMO}
We would like to compare Corollary~\ref{newcor} with the smooth linearization result established in~\cite[Theorem 2]{DZZ}. We stress that~\cite[Theorem 2]{DZZ} requires that~\eqref{lndd} admits a (nonuniform) exponential dichotomy. In particular, it is applicable 
when the sequence $(A_n)_{n\in \Z}$ is as in Example~\ref{EX1} and when $(\gamma_k)_{k\in \Z}$ is a constant sequence, i.e. $\gamma_k=c$, $k\in \Z$, where $c$ is sufficiently small. We note that this situation is not covered with Corollary~\ref{newcor}. Indeed, for any $n\in \Z$
 we have that
\[
\begin{split}
J_n &=\sum _{k> n} |\mathcal{G} (n,k+1)|\gamma_k \left(\prod_{j=n}^{k-1} (|A_j|+\gamma_j)\right) \\
&=c \sum _{k> n} e^{-\lambda (k+1-n)}  \left(\prod_{j=n}^{k-1} (e^\lambda +c)\right) \\
&\ge c\sum _{k> n} e^{-\lambda (k+1-n)} e^{\lambda (k-n)} \\
&=+\infty,
\end{split}
\]
and thus~\ref{H: Kn+Jn smaller 1} cannot hold.

On the other hand, as illustrated by Example~\ref{EX2}, Corollary~\ref{newcor} is applicable in situations when~\eqref{lndd} does not admit an exponential dichotomy. We conclude that Corollary~\ref{newcor} and~\cite[Theorem 2]{DZZ} complement each other.
\end{remark}

\begin{remark}
We note that a result similar to Corollary~\ref{newcor} has been established in~\cite[Theorem 3.2]{CJ} under some additional assumptions on~\eqref{lndd} and assuming that $X$ is finite-dimensional. 
\end{remark}

\subsection{Regularity with respect to the second variable}\label{sec: reg second var}
We are now interested in formulating sufficient conditions under which $H_n$ and $\bar H_n$ are smooth in the second variable.

We start studying the regularity properties of the maps $\eta \to y(k,n,\eta)$ and $\eta \to x_2(k,n,\xi,\eta)$.
Consider
\begin{equation}\label{Dkn}
 D_{k,n}=\begin{cases}
\prod _{j=n}^{k-1}\tau_j & \text{for $k>n$;}\\
1  &\text{for $k=n$;} \\
\prod_{j=k}^{n-1} \sigma_j & \text{for $k<n$,}
\end{cases}
\end{equation}
and
\begin{equation}\label{Mkn}
M_{k,n}=\begin{cases}
 \prod _{j=n}^{k-1}(|A_j|+\gamma_j+\max\{\rho _{j},\tau _{j}\}) & \text{for $k>n$;}\\
1  &\text{for $k=n$;} \\
\prod_{j=k}^{n-1} \left(\frac{|A_{j}^{-1}|}{1-\gamma_{j}|A_{j}^{-1}|}+\sigma_{j}\right) & \text{for $k<n$.}
\end{cases}
\end{equation}

\begin{lemma} \label{lemma: y sol is lips}
Suppose \ref{H: gn is Lipschitz} is satisfied. Then, for any $\eta, \tilde{\eta}\in Y$ and $n,k\in\Z$,
\begin{displaymath}
|y(k,n,\eta)-y(k,n, \tilde \eta)|\leq D_{k,n}|\eta - \tilde{\eta}|.
\end{displaymath}
\end{lemma}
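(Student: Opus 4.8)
The plan is to prove the estimate $|y(k,n,\eta)-y(k,n,\tilde\eta)|\le D_{k,n}|\eta-\tilde\eta|$ by a straightforward one-step estimate combined with induction, exactly mirroring the structure of the proof of Lemma~\ref{lemma: sol lips} but now using the Lipschitz data for $g_n$ and $g_n^{-1}$ supplied by~\ref{H: gn is Lipschitz} instead of the data for $f_n$. The key point is that the second component $y(k,n,\eta)$ of the solution evolves \emph{autonomously} under the $g_n$'s — it does not interact with the $x$-component at all — so the argument is actually cleaner than in Lemma~\ref{lemma: sol lips}.

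First I would fix $n\in\Z$ and treat the three cases $k>n$, $k=n$, $k<n$ separately. The case $k=n$ is trivial since $y(n,n,\eta)=\eta$ and $D_{n,n}=1$. For $k>n$, I would observe from~\eqref{lnd} (equivalently~\eqref{nnd}, since the $y$-equation is the same in both) that $y(n+1,n,\eta)=g_n(y(n,n,\eta))=g_n(\eta)$, so by the first inequality in~\ref{H: gn is Lipschitz},
\begin{displaymath}
|y(n+1,n,\eta)-y(n+1,n,\tilde\eta)|=|g_n(\eta)-g_n(\tilde\eta)|\le \tau_n|\eta-\tilde\eta|.
\end{displaymath}
Then, using the cocycle identity $y(k,n,\eta)=y(k,m,y(m,n,\eta))$ for $k\ge m\ge n$ and iterating, one gets by induction $|y(k,n,\eta)-y(k,n,\tilde\eta)|\le\bigl(\prod_{j=n}^{k-1}\tau_j\bigr)|\eta-\tilde\eta|=D_{k,n}|\eta-\tilde\eta|$ for all $k>n$.

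For $k<n$, the point is that $y(n-1,n,\eta)=g_{n-1}^{-1}(\eta)$: indeed $g_{n-1}(y(n-1,n,\eta))=y(n,n,\eta)=\eta$ and $g_{n-1}$ is a homeomorphism, so $y(n-1,n,\eta)=g_{n-1}^{-1}(\eta)$. Hence by the second inequality in~\ref{H: gn is Lipschitz},
\begin{displaymath}
|y(n-1,n,\eta)-y(n-1,n,\tilde\eta)|=|g_{n-1}^{-1}(\eta)-g_{n-1}^{-1}(\tilde\eta)|\le\sigma_{n-1}|\eta-\tilde\eta|,
\end{displaymath}
and proceeding inductively backwards (again via the cocycle property) yields $|y(k,n,\eta)-y(k,n,\tilde\eta)|\le\bigl(\prod_{j=k}^{n-1}\sigma_j\bigr)|\eta-\tilde\eta|=D_{k,n}|\eta-\tilde\eta|$ for $k<n$, completing the proof.

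I do not anticipate any real obstacle here: there is no fixed-point argument needed (unlike in Lemma~\ref{lemma: sol lips}, where the backward map $T_j$ had to be analyzed implicitly) because $g_n^{-1}$ is directly available by hypothesis. The only thing to be slightly careful about is bookkeeping the index ranges in the products $D_{k,n}$ and making sure the induction is set up so that the cocycle identity is applied in the correct direction — forward composition of the $\tau_j$ for $k>n$, and backward composition of the $\sigma_j$ for $k<n$.
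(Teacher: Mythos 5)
Your proof is correct and is exactly the argument the paper has in mind: the paper's own proof consists of the single line ``This follows easily from the definition of $y(k,n,\eta)$ and hypothesis \ref{H: gn is Lipschitz},'' and your forward/backward induction via the cocycle identity, using $\tau_j$ for $k>n$ and $\sigma_j$ for $k<n$, is precisely the routine verification being omitted.
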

\begin{proof}
This follows easily from the definition of $y(k,n,\eta)$ and hypothesis \ref{H: gn is Lipschitz}.
\end{proof}

\begin{lemma} \label{lemma: sol x2 lips second var}
Suppose \ref{H: fn is Lipschitz and bounded}, \ref{H: Aj gamma j smaller 1}, \ref{H: fn is Lipschitz second var}, \ref{H: gn is Lipschitz} and \ref{H: sigma rho smaller than 1} are satisfied. Then, for any $\xi\in X$, $\eta,\tilde{\eta} \in Y$ and $n,k\in\Z$, we have that 
\begin{displaymath}
|x_2(k,n,\xi,\eta)-x_2(k,n,\xi,\tilde{\eta})|\leq M_{k,n}|\eta - \tilde{\eta}|.
\end{displaymath}
\end{lemma}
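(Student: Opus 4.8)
The plan is to mimic the structure of the proof of Lemma~\ref{lemma: sol lips}, but now tracking how the solution $x_2$ depends on the $y$-variable, which forces us to carry along the Lipschitz estimate for $\eta\mapsto y(k,n,\eta)$ from Lemma~\ref{lemma: y sol is lips}. Fix $n\in\Z$. For the forward direction $k>n$, I would use~\eqref{eq: x2 future}, namely $x_2(n+1,n,\xi,\eta)=A_n\xi+f_n(\xi,\eta)$, together with $y(n,n,\eta)=\eta$, to estimate
\begin{displaymath}
\begin{split}
&|x_2(n+1,n,\xi,\eta)-x_2(n+1,n,\xi,\tilde\eta)|\\
&=|f_n(\xi,\eta)-f_n(\xi,\tilde\eta)|\le \rho_n|\eta-\tilde\eta|,
\end{split}
\end{displaymath}
using~\ref{H: fn is Lipschitz second var}. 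For the inductive step I would write, with the cocycle identity $x_2(k,n,\xi,\eta)=x_2(k,k-1,x_2(k-1,n,\xi,\eta),y(k-1,n,\eta))$,
\begin{displaymath}
\begin{split}
&|x_2(k,n,\xi,\eta)-x_2(k,n,\xi,\tilde\eta)|\\
&\le |A_{k-1}||x_2(k-1,n,\xi,\eta)-x_2(k-1,n,\xi,\tilde\eta)|\\
&\quad+|f_{k-1}(x_2(k-1,n,\xi,\eta),y(k-1,n,\eta))-f_{k-1}(x_2(k-1,n,\xi,\tilde\eta),y(k-1,n,\tilde\eta))|,
\end{split}
\end{displaymath}
and then split the last difference by inserting the intermediate term $f_{k-1}(x_2(k-1,n,\xi,\tilde\eta),y(k-1,n,\eta))$, controlling one piece by $\gamma_{k-1}$ (via~\ref{H: fn is Lipschitz and bounded}) and the other by $\rho_{k-1}$ (via~\ref{H: fn is Lipschitz second var}) combined with Lemma~\ref{lemma: y sol is lips}. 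Denoting $a_k:=|x_2(k,n,\xi,\eta)-x_2(k,n,\xi,\tilde\eta)|$, this gives the recursion $a_k\le (|A_{k-1}|+\gamma_{k-1})a_{k-1}+\rho_{k-1}D_{k-1,n}|\eta-\tilde\eta|$ with $a_n=0$; solving it and using $\rho_{k-1}\le\max\{\rho_{k-1},\tau_{k-1}\}$ together with $D_{j,n}\le M_{j,n}$ yields the bound $a_k\le M_{k,n}|\eta-\tilde\eta|$ (after absorbing the $D$-factors into the $M$-product, which is exactly why $M_{k,n}$ has the $+\max\{\rho_j,\tau_j\}$ term).

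For the backward direction $k<n$, I would reuse the inverse map $T_j$ from~\eqref{Tj} and the identity~\eqref{eq: Tn x2}, which reads $T_{n-1}(\xi,y(n-1,n,\eta))=x_2(n-1,n,\xi,\eta)$. Then
\begin{displaymath}
\begin{split}
&|x_2(n-1,n,\xi,\eta)-x_2(n-1,n,\xi,\tilde\eta)|\\
&=|T_{n-1}(\xi,y(n-1,n,\eta))-T_{n-1}(\xi,y(n-1,n,\tilde\eta))|,
\end{split}
\end{displaymath}
so I first need a Lipschitz estimate for $\eta\mapsto T_j(\xi,\eta)$. From the fixed-point equation $T_j(\xi,\eta)=A_j^{-1}\xi-A_j^{-1}f_j(T_j(\xi,\eta),\eta)$, subtracting the analogous equation at $\tilde\eta$ and inserting the intermediate term $f_j(T_j(\xi,\tilde\eta),\eta)$ gives
\begin{displaymath}
\begin{split}
&|T_j(\xi,\eta)-T_j(\xi,\tilde\eta)|\\
&\le |A_j^{-1}|\gamma_j|T_j(\xi,\eta)-T_j(\xi,\tilde\eta)|+|A_j^{-1}|\rho_j|\eta-\tilde\eta|,
\end{split}
\end{displaymath}
whence by~\ref{H: Aj gamma j smaller 1},
\begin{displaymath}
|T_j(\xi,\eta)-T_j(\xi,\tilde\eta)|\le \frac{|A_j^{-1}|\rho_j}{1-\gamma_j|A_j^{-1}|}|\eta-\tilde\eta|.
\end{displaymath}
Combining this with Lemma~\ref{lemma: y sol is lips} (applied to $|y(n-1,n,\eta)-y(n-1,n,\tilde\eta)|\le\sigma_{n-1}|\eta-\tilde\eta|$) yields a one-step bound $a_{n-1}\le\frac{|A_{n-1}^{-1}|\rho_{n-1}}{1-\gamma_{n-1}|A_{n-1}^{-1}|}\sigma_{n-1}|\eta-\tilde\eta|$. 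Iterating, at each backward step we pick up a factor of $\frac{|A_j^{-1}|}{1-\gamma_j|A_j^{-1}|}$ acting on the accumulated error plus a fresh $\sigma_j$-term coming from the $y$-dependence; using~\ref{H: sigma rho smaller than 1} to bound $\sigma_j\rho_j\le 1$ where needed and collecting terms gives $a_k\le M_{k,n}|\eta-\tilde\eta|$ with $M_{k,n}=\prod_{j=k}^{n-1}\big(\frac{|A_j^{-1}|}{1-\gamma_j|A_j^{-1}|}+\sigma_j\big)$ for $k<n$, as defined in~\eqref{Mkn}. The case $k=n$ is trivial since $M_{n,n}=1$ and $a_n=0$.

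The main obstacle is bookkeeping rather than conceptual: I expect the delicate point to be verifying that the recursive estimates telescope into \emph{exactly} the product formulas in~\eqref{Mkn}, in particular that the mixed terms (error propagation via $|A_j|+\gamma_j$ or $\frac{|A_j^{-1}|}{1-\gamma_j|A_j^{-1}|}$ combined with the freshly generated $\rho_j D_{j,n}$ or $\sigma_j$ contributions) can indeed be absorbed into a single product by the crude bounds $\rho_j\le\max\{\rho_j,\tau_j\}$, $D_{j,n}\le M_{j,n}$ in the forward case and $\sigma_j\rho_j\le 1$ in the backward case. Once the right inductive hypothesis is set up — namely the stated inequality with constant $M_{k,n}$ — each step is a routine triangle-inequality split followed by an application of the already-established Lipschitz bounds from~\ref{H: fn is Lipschitz and bounded}, \ref{H: fn is Lipschitz second var}, \ref{H: gn is Lipschitz}, Lemma~\ref{lemma: sol lips} (for the $\xi$-Lipschitz bound on $x_2$, which enters when comparing the two $f$-arguments at different $\eta$), and Lemma~\ref{lemma: y sol is lips}.
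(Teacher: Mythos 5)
Your plan matches the paper's proof essentially step for step: the same forward splitting via an intermediate $f$-argument (one piece controlled by Lemma~\ref{lemma: sol lips}, the other by \ref{H: fn is Lipschitz second var} and Lemma~\ref{lemma: y sol is lips}) leading to the product $\prod_j(|A_j|+\gamma_j+\max\{\rho_j,\tau_j\})$, and the same backward argument via the estimate $|T_j(\xi,\eta)-T_j(\xi,\tilde\eta)|\le\tfrac{\rho_j|A_j^{-1}|}{1-\gamma_j|A_j^{-1}|}|\eta-\tilde\eta|$ combined with \ref{H: sigma rho smaller than 1}. The bookkeeping point you flag does resolve, but note that in the backward direction the naive inductive hypothesis $a_{k+1}\le M_{k+1,n}|\eta-\tilde\eta|$ is not quite enough to close the induction; one should carry the slightly sharper invariant $a_k\le\tfrac{|A_k^{-1}|}{1-\gamma_k|A_k^{-1}|}M_{k+1,n}|\eta-\tilde\eta|$, after which the fresh term is absorbed via $D_{k+1,n}\le\sigma_{k+1}M_{k+2,n}$ --- which is exactly what the paper's explicit two-step computation does before relaxing to $M_{k,n}$ at the end.
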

\begin{proof}
Fix $n\in \Z$. By \eqref{nnd} we have that
\begin{equation}\label{eq: x2 future second var}
\begin{split}
x_2(n+1,n,\xi,\eta)&=A_n\xi+f_n(\xi,\eta).\\
\end{split}
\end{equation}
Consequently, using \ref{H: fn is Lipschitz second var} we have that 
\begin{displaymath}
\begin{split}
|x_2(n+1,n,\xi,\eta)-x_2(n+1,n,\xi,\tilde{\eta})|&=|f_n(\xi,\eta)-f_n(\xi,\tilde{\eta})|\\
&\leq \rho _n|\eta-\tilde{\eta}|.
\end{split}
\end{displaymath}
Moreover, using Lemmas \ref{lemma: sol lips} and~\ref{lemma: y sol is lips} and the previous observation we get that
\begin{displaymath}
\begin{split}
& |x_2(n+2,n,\xi,\eta)-x_2(n+2,n,\xi,\tilde{\eta})|\\
&=|x_2(n+2,n+1,x_2(n+1,n,\xi,\eta),y(n+1,n,\eta))\\
&\phantom{=}-x_2(n+2,n+1,x_2(n+1,n,\xi,\tilde{\eta}),y(n+1,n,\tilde{\eta}))|\\
&\leq |x_2(n+2,n+1,x_2(n+1,n,\xi,\eta),y(n+1,n,\eta))\\
&\phantom{\leq}-x_2(n+2,n+1,x_2(n+1,n,\xi,\tilde{\eta}),y(n+1,n,\eta))|\\
&\phantom{=}+|x_2(n+2,n+1,x_2(n+1,n,\xi,\tilde{\eta}),y(n+1,n,\eta)) \\
&\phantom{=}-x_2(n+2,n+1,x_2(n+1,n,\xi,\tilde{\eta}),y(n+1,n,\tilde{\eta}))|\\
&\leq C_{n+2,n+1}|x_2(n+1,n,\xi,\eta)-x_2(n+1,n,\xi,\tilde{\eta})|\\
&\phantom{\leq}+\rho _{n+1}|y(n+1,n,\eta)-y(n+1,n,\tilde{\eta})|\\
&\leq C_{n+2,n+1}\rho_n |\eta-\tilde{\eta}| +\rho _{n+1}\tau_n|\eta-\tilde{\eta}|\\
&\leq \left(|A_{n+1}| +\gamma_{n+1} +\max\{\rho _{n+1},\tau _{n+1}\}\right)\max\{\rho_n,\tau_n\}|\eta-\tilde{\eta}|\\
&\leq \left(|A_{n+1}| +\gamma_{n+1} +\max\{\rho _{n+1},\tau _{n+1}\}\right)\left(|A_{n}| +\gamma_{n} +\max\{\rho _{n},\tau _{n}\}\right)|\eta-\tilde{\eta}|.
\end{split}
\end{displaymath}
Using that
\begin{displaymath}
x_2(k,n,\xi,\eta)=x_2(k,m,x_2(m,n,\xi,\eta),y(m,n,\eta)),
\end{displaymath} 
and proceeding inductively we conclude that 
\begin{displaymath}
\begin{split}
|x_2(k,n,\xi,\eta)-x_2(k,n,\xi,\tilde{\eta})|&\leq \prod _{j=n}^{k-1}(|A_j|+\gamma_j+\max\{\rho _{j},\tau _{j}\})|\eta - \tilde{\eta}|\\
&=M_{k,n}|\eta - \tilde{\eta}|,
\end{split}
\end{displaymath}
for every $k> n$.

We now consider  the case when $k<n$. Let $T_j:X\times Y\to X$ be given by~\eqref{Tj}. Given $\xi\in X$ and $\eta,\tilde \eta \in Y$, using \ref{H: fn is Lipschitz and bounded} we get that
\begin{equation}\label{eq: estimative Tj cont second var}
\begin{split}
|T_j(\xi,\eta)-T_j(\xi,\tilde \eta)|&=\left|A_j^{-1}\left( f_j(T_j(\xi,\tilde \eta),\tilde \eta)-f_j(T_j(\xi, \eta), \eta)\right)\right|\\
&\leq |A_j^{-1}|\left| f_j(T_j(\xi,\tilde \eta),\tilde \eta)-f_j(T_j(\xi, \eta), \tilde \eta)\right|\\
&\phantom{=}+|A_j^{-1}|\left| f_j(T_j(\xi,\eta),\tilde \eta)-f_j(T_j(\xi, \eta),  \eta)\right|\\
&\leq |A_j^{-1}|\gamma_j\left| T_j(\xi,\tilde \eta)-T_j(\xi, \eta)\right|\\
&\phantom{=}+|A_j^{-1}|\left| f_j(T_j(\xi,\eta),\tilde \eta)-f_j(T_j(\xi, \eta),  \eta)\right|,\\
\end{split}
\end{equation}
which implies that
\begin{displaymath}
\begin{split}
\left(1-\gamma_j|A_j^{-1}|\right)|T_j(\xi,\eta)-T_j(\xi,\tilde \eta)|&\leq |A_j^{-1}|\left| f_j(T_j(\xi,\eta),\tilde \eta)-f_j(T_j(\xi, \eta),  \eta)\right|\\
&\leq \rho_j|A_j^{-1}|\left| \eta- \tilde  \eta\right|.
\end{split}
\end{displaymath}
Thus, combining this observation with \ref{H: Aj gamma j smaller 1} we conclude that
\begin{displaymath}
\begin{split}
|T_j(\xi,\eta)-T_j(\xi,\tilde \eta)|&\leq \frac{\rho_j|A_j^{-1}|}{1-\gamma_j|A_j^{-1}|}\left| \eta- \tilde  \eta\right|.
\end{split}
\end{displaymath}
Consequently, by \eqref{eq: Tn x2}, Lemma \ref{lemma: y sol is lips} and the the previous observation, we get that
\begin{displaymath}
\begin{split}
&|x_2(n-1,n,\xi,\eta)-x_2(n-1,n,\xi,\tilde \eta)| \\
&= |T_{n-1}(\xi,y(n-1,n,\eta))-T_{n-1}(\xi,y(n-1,n,\tilde \eta))|\\
&\leq \frac{\rho_{n-1}|A_{n-1}^{-1}|}{1-\gamma_{n-1}|A_{n-1}^{-1}|}\left| y(n-1,n, \eta)- y(n-1,n,\tilde \eta)\right|\\
&\leq  \frac{\sigma_{n-1}\rho_{n-1}|A_{n-1}^{-1}|}{1-\gamma_{n-1}|A_{n-1}^{-1}|}\left| \eta- \tilde \eta\right|.\\
\end{split}
\end{displaymath}

Moreover, using Lemmas \ref{lemma: sol lips} and~\ref{lemma: y sol is lips}, the previous observation and \ref{H: sigma rho smaller than 1}, we obtain that
\begin{displaymath}
\begin{split}
& |x_2(n-2,n,\xi,\eta)-x_2(n-2,n,\xi,\tilde{\eta})|\\
&=|x_2(n-2,n-1,x_2(n-1,n,\xi,\eta),y(n-1,n,\eta))\\
&\phantom{=}-x_2(n-2,n-1,x_2(n-1,n,\xi,\tilde{\eta}),y(n-1,n,\tilde{\eta}))|\\
&\leq |x_2(n-2,n-1,x_2(n-1,n,\xi,\eta),y(n-1,n,\eta))\\
&\phantom{\leq}-x_2(n-2,n-1,x_2(n-1,n,\xi,\tilde{\eta}),y(n-1,n,\eta))|\\
&\phantom{=}+|x_2(n-2,n-1,x_2(n-1,n,\xi,\tilde{\eta}),y(n-1,n,\eta))\\
&\phantom{=}-x_2(n-2,n-1,x_2(n-1,n,\xi,\tilde{\eta}),y(n-1,n,\tilde{\eta}))|\\
&\leq C_{n-2,n-1}|x_2(n-1,n,\xi,\eta)-x_2(n-1,n,\xi,\tilde{\eta})|\\
&\phantom{\leq}+ \frac{\sigma_{n-2}\rho_{n-2}|A_{n-2}^{-1}|}{1-\gamma_{n-2}|A_{n-2}^{-1}|}|y(n-1,n,\eta)-y(n-1,n,\tilde{\eta})|\\
&\leq \frac{|A_{n-2}^{-1}|}{1-\gamma_{n-2}|A_{n-2}^{-1}|}\frac{\sigma_{n-1}\rho_{n-1}|A_{n-1}^{-1}|}{1-\gamma_{n-1}|A_{n-1}^{-1}|}|\eta-\tilde{\eta}|+ \frac{\sigma_{n-2}\rho_{n-2}|A_{n-2}^{-1}|}{1-\gamma_{n-2}|A_{n-2}^{-1}|}\sigma_{n-1}|\eta-\tilde{\eta}|\\
&\leq \frac{|A_{n-2}^{-1}|}{1-\gamma_{n-2}|A_{n-2}^{-1}|}\frac{|A_{n-1}^{-1}|}{1-\gamma_{n-1}|A_{n-1}^{-1}|}|\eta-\tilde{\eta}|+ \frac{|A_{n-2}^{-1}|}{1-\gamma_{n-2}|A_{n-2}^{-1}|}\sigma_{n-1}|\eta-\tilde{\eta}|\\
&\leq \left(\frac{|A_{n-2}^{-1}|}{1-\gamma_{n-2}|A_{n-2}^{-1}|}+\sigma_{n-2}\right)\left(\frac{|A_{n-1}^{-1}|}{1-\gamma_{n-1}|A_{n-1}^{-1}|}+\sigma_{n-1}\right)|\eta-\tilde{\eta}|.
\end{split}
\end{displaymath}
Using that 
\begin{displaymath}
x_2(k,n,\xi,\eta)=x_2(k,m,x_2(m,n,\xi,\eta),y(m,n,\eta)),
\end{displaymath} 
and proceeding recursively we conclude that 
\begin{displaymath}
\begin{split}
|x_2(k,n,\xi,\eta)-x_2(k,n,\xi,\tilde{\eta})|&\leq \prod _{j=k}^{n-1}\left(\frac{|A_{j}^{-1}|}{1-\gamma_{j}|A_{j}^{-1}|}+\sigma_{j}\right)|\eta - \tilde{\eta}|\\
&=M_{k,n}|\eta - \tilde{\eta}|,
\end{split}
\end{displaymath}
for every $k<n$.

Finally, since $x_2(n,n,\xi,\eta)=\xi$ the proof is complete.
\end{proof}

\begin{lemma}\label{lemma: diff of y}
Suppose \ref{H: gn diff} is satisfied. Then,  the map $\eta \to y(k,n,\eta)$ is of class $C^r$, $r\geq 1$, for every $n,k\in\mathbb{Z}$.
\end{lemma}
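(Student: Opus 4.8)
The plan is to simply observe that $\eta\to y(k,n,\eta)$ is a finite composition of $C^r$ maps, so no new machinery is needed. First I would recall that, by the very definition of the solution of the uncoupled equation $y_{n+1}=g_n(y_n)$ with value $\eta$ at time $n$, one has
\[
y(k,n,\eta)=\begin{cases}
g_{k-1}\circ g_{k-2}\circ\cdots\circ g_n(\eta) & \text{for $k>n$;}\\
\eta & \text{for $k=n$;}\\
g_{k}^{-1}\circ g_{k+1}^{-1}\circ\cdots\circ g_{n-1}^{-1}(\eta) & \text{for $k<n$.}
\end{cases}
\]
By hypothesis \ref{H: gn diff}, each $g_j$ is a $C^r$-diffeomorphism of $Y$, hence both $g_j$ and $g_j^{-1}$ are of class $C^r$. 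Since the composition of finitely many $C^r$ maps between Banach spaces is again $C^r$ (by the chain rule, applied inductively), it follows that $\eta\to y(k,n,\eta)$ is $C^r$ for all $n,k\in\Z$, which is the desired conclusion. Alternatively, the argument can be phrased as a trivial induction on $|k-n|$, using $y(k+1,n,\eta)=g_k(y(k,n,\eta))$ for the forward direction and $y(k-1,n,\eta)=g_{k-1}^{-1}(y(k,n,\eta))$ for the backward direction, together with the fact that $g_k^{\pm1}$ is $C^r$.

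There is essentially no obstacle here: beyond the explicit formula for $y(k,n,\eta)$, the only facts invoked are that the inverse of a $C^r$-diffeomorphism is itself $C^r$ — which is built into \ref{H: gn diff} — and the stability of the class $C^r$ under finite composition. This is in sharp contrast with the corresponding statement for the map $\xi\to x_2(k,n,\xi,\eta)$ in Proposition~\ref{prop: diff of sol}, where the backward step forced us to solve the implicit equation~\eqref{Tj}; the decoupling of the $y$-component in~\eqref{nnd} is precisely what makes Lemma~\ref{lemma: diff of y} immediate.
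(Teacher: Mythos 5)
Your proof is correct and is essentially the paper's argument: the paper dismisses the lemma as ``a direct consequence of the definition and hypothesis \ref{H: gn diff},'' and your explicit composition formula together with the stability of $C^r$ under finite composition is exactly the content of that remark.
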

\begin{proof}
This is a direct consequence of the definition and hypothesis \ref{H: gn diff}.
\end{proof}

\begin{proposition}\label{prop: diff of sol second var}
Suppose \ref{H: fn is Lipschitz and bounded}, \ref{H: Aj gamma j smaller 1}, \ref{H: fn diff first var}, \ref{H: fn diff second var} and \ref{H: gn diff} are satisfied. Then the map $\eta \to x_2(k,n,\xi,\eta)$ is of class $C^r$, $r\geq 1$, for every  $\xi \in X$ and $n,k\in\mathbb{Z}$.
\end{proposition}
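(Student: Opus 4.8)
The plan is to prove the statement by induction on $k$ (separately upward and downward from $n$, for a fixed $n\in\Z$), paralleling the proof of Proposition~\ref{prop: diff of sol}. The essential new difficulty is that, unlike the variable $\xi$ on which the $y$-component of the flow of~\eqref{nnd} does not depend, the map $\eta\mapsto y(k,n,\eta)$ genuinely varies with $\eta$, so the recursion for $x_2(k,n,\xi,\eta)$ feeds $\eta$ into \emph{both} arguments of $f_k$. For this reason I would establish the formally stronger assertion that $(\xi,\eta)\mapsto x_2(k,n,\xi,\eta)$ is $C^r$, which yields the proposition upon freezing $\xi$.

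For the forward step, recall from~\eqref{nnd} that
\[
x_2(k+1,n,\xi,\eta)=A_k\,x_2(k,n,\xi,\eta)+f_k\big(x_2(k,n,\xi,\eta),\,y(k,n,\eta)\big),
\]
while $x_2(n,n,\xi,\eta)=\xi$. By Lemma~\ref{lemma: diff of y} (using~\ref{H: gn diff}) the map $\eta\mapsto y(k,n,\eta)$ is $C^r$; by Proposition~\ref{prop: diff of sol} the map $\xi\mapsto x_2(k,n,\xi,\eta)$ is $C^r$, and by the inductive hypothesis so is $\eta\mapsto x_2(k,n,\xi,\eta)$. Combining these with~\ref{H: fn diff first var} and~\ref{H: fn diff second var} — together with the uniform bound~\eqref{eq: bound dif of f}, which forces the relevant first derivatives of $f_k$ to stay bounded and hence lets one run the chain rule jointly along the orbit — shows that $(\xi,\eta)\mapsto x_2(k+1,n,\xi,\eta)$ is $C^r$, closing the induction for all $k\ge n$.

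For the backward step I would reuse the map $T_j$ defined in~\eqref{Tj}. In the proof of Proposition~\ref{prop: diff of sol} it is shown that $\xi\mapsto T_j(\xi,\eta)$ is $C^r$ with $\frac{\partial T_j(\xi,\eta)}{\partial\xi}=\big(A_j+\frac{\partial f_j(T_j(\xi,\eta),\eta)}{\partial u}\big)^{-1}$; differentiating the identity~\eqref{Tj} in $\eta$ and repeating the $\mathbf{o}(\cdot)$-estimate from that proof yields that $\eta\mapsto T_j(\xi,\eta)$ is differentiable with $\frac{\partial T_j(\xi,\eta)}{\partial\eta}=-\big(A_j+\frac{\partial f_j(T_j(\xi,\eta),\eta)}{\partial u}\big)^{-1}\frac{\partial f_j(T_j(\xi,\eta),\eta)}{\partial v}$, using~\ref{H: fn diff second var} and~\ref{H: Aj gamma j smaller 1}. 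Since both derivative formulas are built from $T_j$ itself and from derivatives of $f_j$, an induction on the order gives that $(\xi,\eta)\mapsto T_j(\xi,\eta)$ is $C^r$. As~\eqref{eq: Tn x2} reads $x_2(n-1,n,\xi,\eta)=T_{n-1}(\xi,y(n-1,n,\eta))$ and $\eta\mapsto y(n-1,n,\eta)$ is $C^r$ by Lemma~\ref{lemma: diff of y}, we deduce that $(\xi,\eta)\mapsto x_2(n-1,n,\xi,\eta)$ is $C^r$; a downward induction covers all $k<n$, and $k=n$ is trivial.

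The main obstacle is precisely this coupling: one must upgrade the \emph{separate} smoothness supplied by~\ref{H: fn diff first var} and~\ref{H: fn diff second var} to enough \emph{joint} regularity of the orbit map to justify the chain rule at each step of the recursion, and this is where the uniform Lipschitz condition~\ref{H: fn is Lipschitz and bounded} (hence the boundedness of $\frac{\partial f_k}{\partial u}$ recorded in~\eqref{eq: bound dif of f}) together with the invertibility guaranteed by~\ref{H: Aj gamma j smaller 1} enter, exactly as in the proof of Proposition~\ref{prop: diff of sol}.
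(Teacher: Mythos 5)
Your proposal follows essentially the same route as the paper: a forward induction on the one-step recursion combined with Proposition~\ref{prop: diff of sol} and Lemma~\ref{lemma: diff of y}, and a backward step through the implicitly defined map $T_j$ of~\eqref{Tj}, arriving at the same derivative formula $\tilde{L}=-\bigl(A_j+\tfrac{\partial f_j}{\partial u}\bigr)^{-1}\tfrac{\partial f_j}{\partial v}$ that the paper uses together with~\eqref{eq: Tn x2}. Your explicit insistence on joint $C^r$ regularity of $(\xi,\eta)\mapsto x_2(k,n,\xi,\eta)$ so as to justify the chain rule is a reasonable sharpening of the same argument (the paper leaves that step implicit), not a genuinely different approach.
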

\begin{proof}
Fix $n\in \Z$. By \eqref{nnd} we have that
\begin{displaymath}
\begin{split}
x_2(n+1,n,\xi,\eta)&=A_n\xi+f_n(\xi,\eta).\\
\end{split}
\end{displaymath}
Consequently, by \ref{H: fn diff second var} it follows that $\eta \to x_2(n+1,n,\xi,\eta)$ is of class $C^r$. Thus, using that
\begin{displaymath}
x_2(k,n,\xi,\eta)=x_2(k,m,x_2(m,n,\xi,\eta),y(m,n,\eta)),
\end{displaymath}
Proposition \ref{prop: diff of sol}, Lemma \ref{lemma: diff of y}, hypothesis \ref{H: fn diff first var} and \ref{H: fn diff second var} and proceeding inductively, it follows that $\eta \to x_2(k,n,\xi,\eta)$ is of class $C^r$ for every $k>n$.

We now consider the case when $k<n$. Let $T_j:X\times Y\to X$ be given by~\eqref{Tj}.
As in the proof of Proposition \ref{prop: diff of sol}, the result will follow once we prove that  $\eta \to T_j(\xi,\eta)$ is of class $C^r$. We start observing that $\eta \to T_j(\xi,\eta)$ is continuous. Indeed, by \eqref{eq: estimative Tj cont second var} we get that
\begin{displaymath}
\begin{split}
\left(1-\gamma_j|A_j^{-1}|\right)|T_j(\xi,\eta)-T_j(\xi,\tilde \eta)|&\leq |A_j^{-1}|\left| f_j(T_j(\xi,\eta),\tilde \eta)-f_j(T_j(\xi, \eta),  \eta)\right|.\\
\end{split}
\end{displaymath}
Consequently, by \ref{H: Aj gamma j smaller 1} and \ref{H: fn diff second var}, it follows that $|T_j(\xi,\eta)-T_j(\xi,\tilde \eta)|\to 0$ whenever $\tilde{\eta}\to \eta$. We conclude that $\eta \to T_j(\xi,\eta)$ is continuous. 

Let us now consider
\begin{equation*}\label{eq: def of L tilde}
\tilde{L}: =-\left(\Id+A_j^{-1}\frac{\partial f_j(T_j(\xi,\eta),\eta))}{\partial u}\right)^{-1}A_j^{-1} \frac{\partial f_j(T_j(\xi,\eta),\eta))}{\partial v},
\end{equation*}
where $\frac{\partial f_j(\cdot,\cdot) }{\partial u}$ and $\frac{\partial f_j(\cdot,\cdot) }{\partial v}$ denote the derivatives of $f_j$ with respect to the first and second variable, respectively. Observe that, since $|A_j^{-1}\frac{\partial f_j(T_j(\xi,\eta),\eta))}{\partial u}|<1$, the operator $\tilde{L}$ is well defined. Now, since $\eta \to T_j(\xi,\eta)$ is continuous, one may proceed similarly to  the proof of Theorem \ref{theo: reg H}, and obtain that $\eta \to T_j(\xi,\eta)$ is differentiable and $\frac{\partial T_j(\xi,\eta) }{\partial \eta}=\tilde{L}$. Thus, from our hypothesis and the definition of $\tilde{L}$ it follows that $\eta \to T_j(\xi,\eta)$ is in fact of class $C^r$. Consequently, combining this observation with \eqref{eq: Tn x2} and Lemma \ref{lemma: diff of y}, it follows that $\eta \to x_2(k,n,\xi,\eta)$ is of class $C^r$ for every $k<n$. Finally, since $x_2(n,n,\xi,\eta)=\xi$ for any $\eta$ the desired conclusion holds in the case $k=n$ also.
\end{proof}

\begin{theorem}\label{theo: reg H bar second var} 
Suppose hypothesis \ref{H: fn is Lipschitz and bounded}, \ref{H: N finite}, \ref{H: q smaller 1}, \ref{H: Aj gamma j smaller 1}, \ref{H: fn diff first var}, \ref{H: fn is Lipschitz second var}, \ref{H: gn is Lipschitz}, \ref{H: sigma rho smaller than 1}, \ref{H: fn diff second var} and  \ref{H: gn diff} are satisfied. Moreover, given $n\in \Z$ assume that hypothesis \ref{H: temporary name} is satisfied for $n$. Then, the map $\eta \to \bar{H}_n(\xi,\eta)$ is $C^1$.
\end{theorem}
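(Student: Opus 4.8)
The plan is to mimic the proof of Theorem~\ref{theo: reg H bar}, replacing the role of the first variable by the second. First I would recall from the proof of \cite[Theorem 3.1]{BDP} that $\bar H_n(\xi,\eta)=(\xi+\bar h_n(\xi,\eta),\eta)$ with
\[
\bar h_n(\xi,\eta)=-\sum_{k\in\Z}\mathcal G(n,k+1)f_k(x_2(k,n,\xi,\eta),y(k,n,\eta)),
\]
so that it suffices to show $\eta\to\bar h_n(\xi,\eta)$ is $C^1$. By Proposition~\ref{prop: diff of sol second var} the map $\eta\to x_2(k,n,\xi,\eta)$ is $C^r$, by Lemma~\ref{lemma: diff of y} the map $\eta\to y(k,n,\eta)$ is $C^r$, and by~\ref{H: fn diff second var} (together with~\ref{H: fn diff first var}) the map $f_k$ is $C^r$ in both arguments; hence each summand is $C^1$ in $\eta$, with
\[
\frac{\partial}{\partial\eta}f_k(x_2(k,n,\xi,\eta),y(k,n,\eta))=\frac{\partial f_k}{\partial u}\cdot\frac{\partial x_2(k,n,\xi,\eta)}{\partial\eta}+\frac{\partial f_k}{\partial v}\cdot\frac{\partial y(k,n,\eta)}{\partial\eta}.
\]

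Next I would bound these factors uniformly. The estimate $\left|\frac{\partial f_k}{\partial u}\right|\le\gamma_k$ is~\eqref{eq: bound dif of f}, and the analogous difference-quotient computation using~\ref{H: fn is Lipschitz second var} gives $\left|\frac{\partial f_k}{\partial v}\right|\le\rho_k$; meanwhile Lemma~\ref{lemma: sol x2 lips second var} yields $\left|\frac{\partial x_2(k,n,\xi,\eta)}{\partial\eta}\right|\le M_{k,n}$ and Lemma~\ref{lemma: y sol is lips} yields $\left|\frac{\partial y(k,n,\eta)}{\partial\eta}\right|\le D_{k,n}$. Consequently
\[
\sum_{k\in\Z}\left|\frac{\partial}{\partial\eta}\mathcal G(n,k+1)f_k(x_2(k,n,\xi,\eta),y(k,n,\eta))\right|\le\sum_{k\in\Z}|\mathcal G(n,k+1)|\left(\gamma_k M_{k,n}+\rho_k D_{k,n}\right),
\]
which is finite by~\ref{H: temporary name}. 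Since this majorant does not depend on $\eta$, the differentiated series converges uniformly, term-by-term differentiation is justified, its sum equals $\frac{\partial\bar h_n(\xi,\eta)}{\partial\eta}$, and being a uniform limit of continuous functions it is continuous in $\eta$. Hence $\eta\to\bar h_n(\xi,\eta)$ is $C^1$, and therefore so is $\eta\to\bar H_n(\xi,\eta)$.

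The routine parts are the chain-rule bookkeeping and the difference-quotient bound for $\frac{\partial f_k}{\partial v}$. The one point requiring care is that the bound in~\ref{H: temporary name} must dominate the derivative series \emph{uniformly in $\eta$} (and in $\xi$); this is precisely why Lemmas~\ref{lemma: sol x2 lips second var} and~\ref{lemma: y sol is lips} are stated with constants $M_{k,n}$, $D_{k,n}$ independent of the base point. I expect no substantial obstacle beyond checking that the hypotheses listed in the statement are exactly those needed to invoke Proposition~\ref{prop: diff of sol second var} and the two Lipschitz lemmas.
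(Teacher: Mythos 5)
Your proposal is correct and follows essentially the same route as the paper's proof: the same series representation of $\bar h_n$, the same chain-rule decomposition of $\frac{\partial}{\partial\eta}f_k(x_2(k,n,\xi,\eta),y(k,n,\eta))$, the same derivative bounds $\gamma_k$, $\rho_k$, $M_{k,n}$, $D_{k,n}$ drawn from the two Lipschitz lemmas, and the same appeal to \ref{H: temporary name} for uniform convergence of the differentiated series. No substantive differences.
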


\begin{proof} We proceed as in the proof of Theorem \ref{theo: reg H bar}. Fix $n\in \Z$ and recall that 
\[ \bar H_n(\xi,\eta)=(\xi+\bar h_n(\xi,\eta),\eta)\]
where
\begin{equation}\label{eq: hn bar 2}
\bar h_n(\xi, \eta)=-\sum_{k\in \Z} \mathcal G(n, k+1)f_{k}(x_2(k, n, \xi, \eta), y(k, n, \eta)),
\end{equation}
for $(\xi, \eta)\in X\times Y$ (see the proof of \cite[Theorem 3.1]{BDP}). Thus, it remains to prove  that the map $\eta \to \bar h_n(\xi,\eta)$ is $C^1$.

By hypothesis \ref{H: fn diff first var} and \ref{H: fn diff second var}, Lemma \ref{lemma: diff of y} and Proposition \ref{prop: diff of sol second var} we have that each of the terms involved in the series \eqref{eq: hn bar 2} are differentiable with respect to $\eta$. Moreover,
\begin{equation}\label{2113}
\begin{split}
\frac{\partial f_{k}(x_2(k, n, \xi, \eta), y(k, n, \eta))}{\partial \eta}&= \frac{\partial f_{k}(x_2(k, n, \xi, \eta), y(k, n, \eta))}{\partial u}\frac{\partial x_2(k, n, \xi, \eta)}{\partial \eta} \\
&\phantom{=} + \frac{\partial f_{k}(x_2(k, n, \xi, \eta), y(k, n, \eta))}{\partial v}\frac{\partial y(k, n, \eta)}{\partial \eta},
\end{split}
\end{equation}
where $\frac{\partial f_k(\cdot,\cdot) }{\partial u}$ and $\frac{\partial f_k(\cdot,\cdot) }{\partial v}$ denote the derivative of $f_k$ with respect to the first and second variables, respectively. Furthermore, by \eqref{eq: bound dif of f} we have that
\begin{displaymath}
\begin{split}
\left|\frac{\partial f_k(u,v)}{\partial u}\right|&\leq \gamma_k.
\end{split}
\end{displaymath}
Similarly, using respectively \ref{H: fn is Lipschitz second var}, Lemma \ref{lemma: y sol is lips} and Lemma \ref{lemma: sol x2 lips second var}, we get that 
\begin{displaymath}
\begin{split}
\left|\frac{\partial f_k(u,v)}{\partial v}\right|\leq \rho_k,\quad  \left|\frac{\partial  y(k, n, \eta)}{\partial \eta}\right|\leq D_{k,n} \text{ and } \left|\frac{\partial x_2(k, n, \xi, \eta)}{\partial \eta }\right| \leq M_{k,n}.
\end{split}
\end{displaymath}
Combining these facts with~\eqref{2113},  we conclude that
\begin{equation}\label{2116}
\begin{split}
\left|\frac{\partial f_{k}(x_2(k, n, \xi, \eta), y(k, n, \eta))}{\partial \eta} \right|\leq \gamma_k M_{k,n}+\rho_k D_{k,n}.
\end{split}
\end{equation}

By~\ref{H: temporary name} and~\eqref{2116}, we have  that
\begin{displaymath}
\begin{split}
&\sum_{k\in \Z} \left| \frac{\partial}{\partial \eta}  \mathcal G(n, k+1)f_{k}(x_2(k, n, \xi, \eta), y(k, n, \eta))\right| 
\\
&\leq \sum_{k\in \Z}| \mathcal G(n, k+1)|\left| \frac{\partial f_{k}(x_2(k, n, \xi, \eta), y(k, n, \eta))}{\partial \eta}\right| \\
&\leq \sum_{k\in \Z}| \mathcal G(n, k+1)|\left(\gamma_k M_{k,n}+\rho_k D_{k,n}\right) <+\infty.\\
\end{split}
\end{displaymath}
In particular, the series
\begin{displaymath}
-\sum_{k\in \Z}  \frac{\partial}{\partial \eta}  \mathcal G(n, k+1)f_{k}(x_2(k, n, \xi, \eta), y(k, n, \eta))
\end{displaymath}
converges uniformly. Thus, it coincides with $\frac{\partial \bar h_n(\xi, \eta) }{\partial \eta}$ and $\eta\to \bar h_n(\xi,\eta)$ is $C^1$. Consequently, $\eta\to \bar H_n(\xi,\eta)$ is $C^1$ as claimed. The proof of the theorem is completed.
\end{proof}

\begin{theorem}\label{theo: reg H second var}
Suppose hypothesis \ref{H: fn is Lipschitz and bounded}, \ref{H: N finite}, \ref{H: q smaller 1}, \ref{H: Aj gamma j smaller 1}, \ref{H: fn diff first var}, \ref{H: fn is Lipschitz second var}, \ref{H: gn is Lipschitz}, \ref{H: sigma rho smaller than 1}, \ref{H: fn diff second var} and \ref{H: gn diff} are satisfied. Moreover, given $n\in \Z$ assume that hypothesis \ref{H: Kn+Jn smaller 1} and \ref{H: temporary name} are satisfied for $n$. Then, the map $\eta \to H_n(\xi,\eta)$ is $C^1$.
\end{theorem}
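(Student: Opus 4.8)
The plan is to mimic the passage from Theorem~\ref{theo: reg H bar} to Theorem~\ref{theo: reg H}, but now in the second variable $\eta$. Recall that Theorem~\ref{theo: reg H second var} has exactly the hypotheses of Theorem~\ref{theo: reg H bar second var} plus hypothesis~\ref{H: Kn+Jn smaller 1} for $n$; so Theorem~\ref{theo: reg H bar second var} already gives us that $\eta\to\bar h_n(\xi,\eta)$ is $C^1$, and Theorem~\ref{theo: reg H} (whose hypotheses are all in force here) already gives us that $\xi\to h_n(\xi,\eta)$ is $C^1$ with $\frac{\partial h_n(\xi,\eta)}{\partial\xi}=R$ as in~\eqref{eq: def of R}. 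As in the proof of Theorem~\ref{theo: reg H}, the starting point is the identity $\bar H_n(H_n(\xi,\eta))=(\xi,\eta)$ from Theorem~\ref{theo: BDP disc}, which again yields
\begin{equation*}
h_n(\xi,\eta)=-\bar h_n(\xi+h_n(\xi,\eta),\eta),\quad\text{for every $(\xi,\eta)\in X\times Y$.}
\end{equation*}

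Next I would differentiate this relation formally in $\eta$. Writing $\delta$ for a small increment of $\eta$ and using that $\bar h_n$ is $C^1$ (Theorem~\ref{theo: reg H bar second var}), a first-order expansion gives
\begin{equation*}
h_n(\xi,\eta+\delta)-h_n(\xi,\eta)=-\frac{\partial\bar h_n}{\partial u}\big(h_n(\xi,\eta+\delta)-h_n(\xi,\eta)\big)-\frac{\partial\bar h_n}{\partial v}\delta+\mathbf{o}\big(\delta+h_n(\xi,\eta+\delta)-h_n(\xi,\eta)\big),
\end{equation*}
where the partials of $\bar h_n$ are evaluated at $(\xi+h_n(\xi,\eta),\eta)$. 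Since Theorem~\ref{theo: reg H} established $\big|\frac{\partial\bar h_n(\xi,\eta)}{\partial u}\big|<1$ for every $(\xi,\eta)$, the operator $\Id+\frac{\partial\bar h_n}{\partial u}$ is invertible, so one sets
\begin{equation*}
S:=-\left(\Id+\frac{\partial\bar h_n(\xi+h_n(\xi,\eta),\eta)}{\partial u}\right)^{-1}\frac{\partial\bar h_n(\xi+h_n(\xi,\eta),\eta)}{\partial v},
\end{equation*}
solves for the increment, and obtains $h_n(\xi,\eta+\delta)-h_n(\xi,\eta)=S\delta+\mathbf{o}(\delta+h_n(\xi,\eta+\delta)-h_n(\xi,\eta))$. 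The by-now familiar bootstrap — using continuity of $\eta\to h_n(\xi,\eta)$ to get a local Lipschitz bound $\frac{|h_n(\xi,\eta+\delta)-h_n(\xi,\eta)|}{|\delta|}\le C$ for $|\delta|$ small, hence $\mathbf{o}(\delta+h_n(\xi,\eta+\delta)-h_n(\xi,\eta))=\mathbf{o}(\delta)$ — then upgrades this to genuine differentiability with $\frac{\partial h_n(\xi,\eta)}{\partial\eta}=S$. Finally, continuity of $(\xi,\eta)\mapsto\frac{\partial\bar h_n}{\partial u}$ and $\frac{\partial\bar h_n}{\partial v}$ (both $C^1$ pieces coming from Theorem~\ref{theo: reg H bar second var}) together with continuity of $\eta\to h_n(\xi,\eta)$ shows $S$ depends continuously on $\eta$, so $\eta\to h_n(\xi,\eta)$, and hence $\eta\to H_n(\xi,\eta)=(\xi+h_n(\xi,\eta),\eta)$, is $C^1$.

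I expect no conceptually new obstacle: the argument is structurally identical to that of Theorem~\ref{theo: reg H}, with $\frac{\partial\bar h_n}{\partial v}\delta$ playing the role of $\frac{\partial\bar h_n}{\partial u}\delta$ in~\eqref{eq: est hn o}. The one point that requires a little care — and the natural place to invoke the hypotheses carried over from Theorem~\ref{theo: reg H bar second var} rather than re-proving anything — is that $\bar h_n$ is $C^1$ \emph{jointly}, or at least that both partials $\frac{\partial\bar h_n}{\partial u}$ and $\frac{\partial\bar h_n}{\partial v}$ exist and are continuous, so that the first-order expansion above is legitimate when $\delta$ moves $\eta$ while the basepoint's first coordinate $\xi+h_n(\xi,\eta)$ stays fixed; here $\frac{\partial\bar h_n}{\partial v}$ comes from Theorem~\ref{theo: reg H bar second var} and $\frac{\partial\bar h_n}{\partial u}$ from Theorem~\ref{theo: reg H bar}, and their continuity is what makes $S$ continuous in $\eta$. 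Beyond that, the proof is a routine adaptation and can largely be written as ``proceed exactly as in the proof of Theorem~\ref{theo: reg H}, replacing $\frac{\partial\bar h_n}{\partial u}\delta$ by $\frac{\partial\bar h_n}{\partial v}\delta$ throughout.''
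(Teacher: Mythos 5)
Your proposal is correct and follows essentially the same route as the paper: the authors also define the candidate derivative $\tilde R=-\left(\Id+\frac{\partial\bar h_n}{\partial u}\right)^{-1}\frac{\partial\bar h_n}{\partial v}$ (your $S$), invoke the identity $h_n(\xi,\eta)=-\bar h_n(\xi+h_n(\xi,\eta),\eta)$, and repeat the bootstrap from Theorem~\ref{theo: reg H} verbatim. Your remark about needing both partials of $\bar h_n$ to be continuous so that the two-variable first-order expansion is legitimate is a point the paper passes over silently, and is worth keeping.
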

\begin{proof}
The proof is similar to the proof of Theorem \ref{theo: reg H}. By Theorem \ref{theo: BDP disc}, $H_n(\xi, \eta)=(\xi+h_n(\xi, \eta), \eta)$ we need to prove that the map $\eta\to h_n(\xi, \eta)$ is $C^1$. As in the proof of Theorem~\ref{theo: reg H} we get that $\left|\frac{\partial \bar h_n(\xi, \eta) }{\partial \xi}\right|<1$ for every $(\xi,\eta)\in X\times Y$. Hence,  the operator $\Id +\frac{\partial \bar h_n(\xi, \eta) }{\partial \xi}$ is invertible. Let
\begin{equation}\label{eq: def of R tilde}
\tilde R:=-\left(\Id +\frac{\partial \bar h_n(\xi +h_n(\xi, \eta), \eta) }{\partial u}\right)^{-1}\frac{\partial \bar h_n(\xi +h_n(\xi, \eta),\eta) }{\partial v},
\end{equation}
where $\frac{\partial \bar h_n(\cdot,\cdot) }{\partial u}$ and $\frac{\partial \bar h_n(\cdot,\cdot) }{\partial v}$ denote the derivative of $\bar h_n(\cdot,\cdot)$ with respect to the first and second variables, respectively. 

Now, using that $h_n(\xi,\eta)=-\bar h_n(\xi+h_n(\xi,\eta),\eta)$ (see~\eqref{2131}) together with the differentiability of  $\bar{h}_n$  with respect to the first and second variable (see Theorems \ref{theo: reg H bar} and \ref{theo: reg H bar second var}) and the continuity of $\eta \to h_n(\xi,\eta)$, we may proceed as in the proof of Theorem \ref{theo: reg H} and conclude that $\eta \to h_n(\xi,\eta)$ is differentiable and that $\frac{\partial h_n(\xi, \eta) }{\partial \xi}=\tilde R$. Moreover, it follows from the expression of $\tilde R$ given in \eqref{eq: def of R tilde} and the fact that $\xi \to \bar h_n(\xi,\eta)$ and $\eta \to \bar h_n(\xi,\eta)$ are $C^1$ that $\eta \to h_n(\xi,\eta)$ is also $C^1$ as claimed. 
\end{proof}

\begin{example}\label{end}
Let us discuss an example under which~\ref{H: temporary name} holds. Suppose that 
\[
M:=\sup_{k\in \Z} \max \{ |A_k|, |A_k^{-1}|, |P_k|, \tau_k, \sigma_k \} <+\infty. 
\]
Furthermore, assume that
\[
\rho_k \le \tau_k, \quad \gamma_k \le \frac{1}{(3M)^{2|k|+1}} \quad \text{and} \quad \rho_k \le \frac{1}{2^{|k|} M^{2|k|}},
\]
for $k\in \Z$. By arguing as in Example~\ref{remm}, one can easily show that~\ref{H: temporary name} holds for each $n\in \Z$.
\end{example}

\section{Acknowledgements}
We would like to thank the referees for their useful comments that helped us to improve our paper.
 L.B. was partially supported by a CNPq-Brazil PQ fellowship under Grant No. 306484/2018-8. D. D. was supported in part by Croatian Science Foundation under the project IP-2019-04-1239 and by the University of Rijeka under the projects uniri-prirod-18-9 and uniri-prprirod-19-16.

\end{document}